\documentclass[a4paper,11pt,reqno,noindent]{amsart}
\usepackage[centertags]{amsmath}
\usepackage{amsfonts,amssymb,amsthm} 
\usepackage{hyperref}

 \hypersetup{
     pdfmenubar=false,        
     pdfnewwindow=true,      
     colorlinks=false,       
     linkcolor=blue,          
     citecolor=blue,        
     filecolor=magenta,      
     urlcolor=cyan           
 }
\usepackage{graphicx} 

\usepackage[english]{babel}
\usepackage{color}
\usepackage[body={15cm,21.5cm},centering]{geometry} 
\usepackage{fancyhdr}
\pagestyle{fancy}
\usepackage{esint}
\usepackage{enumerate}
\usepackage[numbers,sort]{natbib}

\newtheorem{theorem}{Theorem}
\newtheorem{lemma}{Lemma}
\newtheorem{proposition}{Proposition}
\newtheorem{corollary}{Corollary}
\newtheorem{remark}{Remark}
\newtheorem{definition}{Definition}

\theoremstyle{definition}

\newtheorem{example}{Example}

\fancyhead[RO,LE,LO,RE]{}
\fancyhead[CO]{\scriptsize\rightmark}
\fancyhead[CE]{\scriptsize\leftmark}

\setlength{\headheight}{12pt}  
\setlength{\parindent}{0pt} 
\setlength{\headsep}{25pt} 
\usepackage[active]{srcltx}

\newcommand{\R}{{\mathbb R}}

\newcommand{\son}{\mathrm{SO}(n)}

\newcommand{\Pcal}{{\mathcal P}}

\newcommand{\N}{{\mathbb N}}

\newcommand{\non}{{\nonumber}}

\newcommand{\A}{{\mathcal A}}

\newcommand{\e}{\varepsilon}
\newcommand{\per}{{\mathrm {Per }}}

\renewcommand{\d}{\,{\mathrm d}}
\newcommand{\dx} {\,{\mathrm d}x}

\renewcommand{\H}{{\mathcal H}}
\renewcommand{\S}{{\mathcal S}}
\newcommand{\B}{{\mathcal B}_h}
\newcommand{\G}{{\mathcal G}_h}
\newcommand{\hausd}{{\mathcal {H}} }
\newcommand{\leb}{{\mathcal {L}} }

\newcommand{\id}{\mathrm{id}}
\newcommand{\dist}{{\mathrm dist}\,}
\newcommand{\Ik}{I^{\mathrm Kh.}\,}
\newcommand{\Iko}{I_{3d}^{\mathrm Kh.}\,}

\newcommand{\sff}{{\mathrm II}}

\renewcommand{\deg}{{\mathrm{ deg}}\,}
\renewcommand{\det}{{\mathrm{det}}\,}
\newcommand{\kap}{\,{\mathrm{cap}}}
\newcommand{\sgn}{\,{\mathrm{sgn}}}
\newcommand{\supp}{\,{\mathrm{ supp}}}
\newcommand{\diam}{{\mathrm{diam}}\,}

\newcommand\wto{\rightharpoonup}

\newcommand{\insieme}[1]{\left \{#1\right \}}

\newcommand{\Lb}{{\mathcal L}}

%
\begin{document}
\title{Interpenetration of matter in plate theories obtained as
  $\Gamma$-limits}\thanks{This work has been completed while the second author
  held a Hausdorff scholarship (PhD grant) at the Hausdorff Center for Mathematics in Bonn.}
\author{Heiner Olbermann}\address{Hausdorff Center for Mathematics \& Institute for Applied Mathematics,
University of Bonn,
Endenicher Allee 60,
53115 Bonn, Germany, {\tt heiner.olbermann@hcm.uni-bonn.de}}
\author{Eris Runa}\address{Max Planck Institute for Mathematics in the Sciences,
  Inselstrasse 22, 04103 Leipzig, Germany, {\tt eris.runa@mis.mpg.de}}
%
\date{\today}
\begin{abstract}
We reconsider the derivation of plate theories as $\Gamma$-limits of $3$-dimensional nonlinear elasticity and define a suitable notion for the interpenetration of matter in the limit configuration. This is done via the Brouwer degree.
For the approximating maps, we adopt as definition of interpenetration of matter
the notion of non-invertibility almost everywhere, 
see J.  M.  Ball: Global invertibility of Sobolev functions and the interpenetration of matter, Proc.\ Roy.\ Soc.\ Edinburgh Sect.\ A, 88(3-4):315--328, 1981. Given a limit map satisfying the former interpenetration property, we show that any recovery sequence (in the sense of $\Gamma$-convergence) has to consist of maps that satisfy the latter interpenetration property except for finitely many sequence elements. Then we explain how our result is applied in the context of the derivation of plate theories. 
\end{abstract}
\subjclass{73K10, 49J45}
\keywords{Derivation of plate theories, $\Gamma$-convergence, nonlinear plate
  theory, interpenetration of matter}
\maketitle
\section*{Introduction}
In the mathematical theory of nonlinear elasticity, the elastic deformations of
an elastic body
are identified with (almost-) minimizers of some free elastic
energy 
functional. This identification works as follows: The reference configuration of the elastic body is  some domain
$\Omega\subset\R^n$,  the deformation is a map $y:\Omega\to\R^m$, and the
associated  energy $I:X\to\R$ has as domain the function space of
deformations $y$. 
Of crucial importance is the right choice for the
 function space $X$. Unphysical
deformations (e.g., non-injective maps, which represent configurations
displaying self-penetration of matter)
should either be excluded from $X$, or  the  energy of these configurations
should be infinite, signaling that it is not possible to observe them in the
``real world''. There exists a large amount of literature on how to choose the
function space of elastic deformations in a manner  that at the same time
excludes unphysical configurations and ensures existence of energy minimizers. We do not attempt to give an exhaustive
literature review here, and only 
 mention
\cite{MR0475169,MR0478899,MR703623,MR1346364,MR862546,MR1393418,MR616782}.
In \cite{MR1346364}, a framework has been
introduced that allows for \emph{cavitation}, i.e., the free energy allows for
the formation of holes in the elastic body. Cavity formation can be observed in
experiments;  the mathematical theory for radially symmetric cavities has
been developed in \cite{MR703623}. In \cite{MR1346364}, the function space $X$
is chosen such that cavities created at one point cannot be filled with matter
from elsewhere. Clearly, this is another property that ``physical'' deformations
of an elastic body should fulfill. The mathematical formulation of this condition (called ``(INV)''
in  \cite{MR1346364}) is rather technical. \\
\\
An important question in nonlinear elasticity is the relation between models in
three, two and one dimensions. Conceptually and mathematically, the most
satisfying approach is the derivation of lower dimensional models from a
3-dimensional one by $\Gamma$-convergence \cite{MR1201152}. In
\cite{MR1916989,MR2210909}, a hierarchy of 2-dimensional plate models has been
derived from 3-dimensional nonlinear elasticity. 
These models can be classified by the assumed scaling of the energy per unit
thickness $I_h$ in the underlying 3d theory, where $h$ denotes the thickness of the elastic sheet. Assuming $I_h\sim h^{\beta}$, where $h$ is the thickness of the
elastic plate, the $\Gamma$-limit for $\beta=2$ is nonlinear bending theory
\cite{MR1916989}. The parameter choice $2<\beta<4$ results in
``von-K\'arm\'an-like'' plate theories, see \cite{MR2210909}.\\
\\
The 3d models taken as a
starting point for this hierarchy of $\Gamma$-limits do not require
the condition (INV). 
In \cite{MR1346364} it is shown that in general, if
condition (INV) is not imposed,
 it is possible to construct sequences of (almost everywhere) invertible deformations of finite energy that weakly converge
to a  non-(a.~e.) invertible one. We will give a slightly more  detailed presentation of this construction in Section~\ref{sec:Iae}. 
What matters for us is that such a situation is  potentially problematic for the derivation of plate theories by $\Gamma$-convergence:  A weakly converging sequence  of invertible (a.~e.) functions might result in a non-invertible (a.~e.)
configuration with finite elastic energy in the 2d limit theory. 
The obvious
cure would be, of course, to impose condition (INV) on the 3d theory.
In the present contribution, we show that this is not necessary.\\ 
\\
In contrast to
 the existing mathematical literature on interpenetration of matter that
mainly focuses on finding sufficient conditions for invertibility of elastic
deformations, we here identify sufficient conditions for non-invertibility. Questions related to the image of Sobolev functions are known
to be a delicate issue, and these objects may display counter-intuitive features,
cf.~the pathological examples going back to Besicovitch
\cite{MR0036825,MR1310951}. Here, such pathologies are not problematic, because we want to show that the image of the considered functions is sufficiently large.\\
\\
This will be achieved in the main theorem of the present paper, Theorem~\ref{mainthm}. We will assume the typical conditions fulfilled by sequences of elastic
deformations of thin films in the derivation of 3d-to-2d
$\Gamma$-limits. Additionally, we will assume that the limit configuration is
non-invertible in a suitable sense, see Definition~\ref{def:simple_interpenetration}.  This
definition is crucial for our method of proof to be workable. 
The statement of Theorem~\ref{mainthm} is that under these
assumptions, the considered sequence $y_h$ of elastic deformations must consist of
non-invertible functions for $h$ small enough as $h\to 0$. 

The structure of the present paper is as follows. 
In section~\ref{sec:results}, we state our main result. 
In section~\ref{setting}, we recall some results from the literature that we will use for its proof.  
The proof of the theorem (see section~\ref{sec:pf_mainthm}) is
based on a reduction to a  2-dimensional domain. The intersection on a
sufficiently large set in the 2d-domain is proved by a homotopy argument, and
the passage back to the 3-dimensional situation is performed with the help of
the geometric rigidity result by Friesecke, James and M\"uller
\cite{MR1916989}. In Section~\ref{sec:appl}, we recall the derivation of plate
theories as  $\Gamma$-limits of 3d-nonlinear elasticity, and obtain some
straightforward corollaries from the application of Theorem~\ref{mainthm} to
these settings. \\
\\
{\bf Notation.} The symbol $C$ will be used  as follows. A statement such as ``$f\leq Cg$'', where
$f,g$ are quantities that depends on a variable $x$, is to be read as
: There exists a numerical constant $C>0$ with the property that $f\leq C
g$ for all $x$. The value of $C$ may change from one line to the next.\\
The $d$-dimensional Lebesgue measure is denoted by $\Lb^d$.
Further we write $\omega(m)=\Gamma(1/2)^m/\Gamma(m/2+1)$; if
$m\in\N$, then $\omega(m)$ is the volume of the
$m$-dimensional ball.
\section{Statement of results}
\label{sec:results}
\subsection{Brouwer degree}
\label{sec:Brouwer}
First we need to recall the definition and some basic properties of the Brouwer degree.
For $ U\subset\R^n$ bounded, $f\in C^\infty(\bar U,\R^n)$, and $y\in \R^n\setminus f(\partial U)$ such that $\det\nabla f(x)\neq 0$ for all $x\in f^{-1}(y)$, the Brouwer degree is defined by
\begin{equation*} 
   \begin{split}\label{eq:det_formulation_degree}
      \deg(f, U,y)=\sum_{x\in f^{-1}(y)}\sgn(\det\nabla f(x))\,.
   \end{split}
\end{equation*} 
One can show that for $\varphi\in C^\infty_0(\R^n)$ with $\supp(\varphi)\cap f(\partial U)=\emptyset$, and any $y\in\R^n$ in the same connected component of $\R^n\setminus f(\partial U)$ as $\supp(\varphi)$, 
\[
\deg(f,  U,y)\int_{\R^n}\varphi(z)\d z=\int_U\varphi(f(x))\det\nabla f(x)\d x\,.
\]
By this formula and approximation by smooth functions, one can define the degree
for any continuous  $f\in C^0(\bar  U,\R^n)$ and $y\not\in f(\partial U)$. One
can show that the degree only depends on $f|_{\partial U}$. Hence, from now on,
we write $\deg(f,\partial U, y)\equiv \deg(f, U, y)$. Another basic property of the degree is
\[
\deg(f, \partial U,y)\neq 0\quad\Rightarrow \quad y\in f( U).
\]
On each connected component of $\R^n\setminus f(\partial U)$,
$\deg(f, \partial U,\cdot)$ is constant. The latter yields the implication
\begin{equation}
\label{eq:9}
y_0\in \partial\{y\in\R^n:\deg(f, \partial U,y)=k\}\quad\Rightarrow\quad y_0\in
f(\partial U)\,,
\end{equation}
for any $k\in\N$. Finally, we will need the homotopy invariance of the degree: If $\gamma:[0,1]\to\R^n$  and $H:[0,1]\times
\overline U\to\R^n$ are continuous, and $\gamma(t)\not\in H(t,\partial U)$ for
$t\in [0,1]$, then
\begin{equation}
  \label{eq:111}
  \deg(H(0,\cdot),\partial U,\gamma(0))=  \deg(H(1,\cdot),\partial U,\gamma(1))\,.
\end{equation}
For the details of the definition and the proofs of the properties mentioned here, we refer to \cite{MR787404}.

\subsection{Invertibility almost everywhere and the example by M\"uller and Spector}
\label{sec:Iae}
Next we introduce appropriate notions
of invertibility for Sobolev functions. 
\begin{definition}[Invertibility almost everywhere \cite{MR616782,MR1346364}]
\label{def:invert}
Let $U\subset\R^n$, and let $f$ be (a representative of an equivalence class) in
$W^{1,1}(U,\R^n)$. 
We say $f$ is 
invertible almost everywhere if there is a null
set $N\subset U$ such that $f|_{U\setminus N}$ is injective.
\end{definition}
\noindent
Note that invertibility almost everywhere only depends on the equivalence class.\\
\\
In \cite{MR1346364}, M\"uller and Spector gave an example of a sequence of
a.~e.~invertible maps that weakly converge to a map that is $2$-to-$1$ on a set of
positive measure. (As in~\cite{MR1346364}, by saying that a map $u$ is $2$-to-$1$ at a point $x$ we mean that there exists exactly one point $\bar{x}\neq x$ such that $u(x)=u(\bar{x})$.) 
Their examples were two-dimensional, but similar (slightly
more complicated) constructions can be carried out in higher dimensions
too. Crucial for their construction is the assumed regularity. The formation of
cavities must be permitted, which is the case if the deformations are $W^{1,p}$
with $p<n$, where $n$ is the dimension of the domain. We do not give the
explicit formulas for the examples, but only give a qualitative explanation and
refer to Figure~\ref{fig:prova}, where the construction is sketched. 

\begin{figure}
   
   \begin{center}
      \includegraphics[scale=.8]{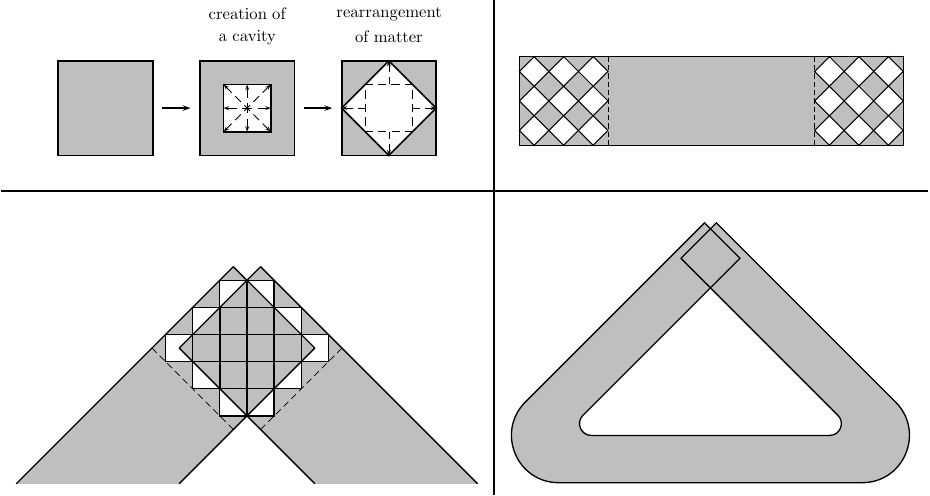}
   \end{center}
   \caption{\label{fig:prova}The pathological example by M\"uller and Spector. 
   }
\end{figure}

The domain of the example is a strip $\Omega\subset \R^2$. The deformations  are in $W^{1,p}(\Omega,\R^2)$ for all $p<2$, and are constructed as follows: 
One starts off with the formation of one single cavity in a quadratic reference configuration, and subsequent continuous deformation. This is depicted in the upper left frame in Figure~\ref{fig:prova}.
In the upper right frame, this building block is scaled and periodically continued to a larger square. Two of these larger squares are the end parts of the deformed rectangular strip $u(\Omega)$. 
Then the strip is bent so that material from one end covers the voids from the other (see the lower left frame of Figure~\ref{fig:prova}). The  map constructed in this way is invertible almost everywhere. Letting the period of the perforation tend to 0, the resulting sequence converges weakly in $W^{1,p}(\Omega,\R^2)$, for all $p<2$, to a deformation that is 2-to-1 on a set of positive measure (see the lower right frame of Figure~\ref{fig:prova}).

\subsection{Interpenetration for codimension one maps}

For maps $\R^{n-1}\supset U\to\R^{n}$ as they occur in  plate
theories, the above definition of invertibility almost everywhere is not suitable. Here,
modifications on sets of measure zero will be enough to make
deformations with interpenetration of matter injective. We by-pass this problem
by restricting
ourselves to continuous deformations -- in fact, we will even require Lipschitz
continuity, since this is general enough for all applications to the derivation
of plate theories.\\
For $U\subset \R^{n-1}$, we let  $\hat U\subset\R^n$ denote the boundary of the cylinder over $U$:
\[
   \hat U:=\partial\left(U\times[0,1]\right)\,.
\]
In the following, we will identify $U$ with $U\times\{0\}\subset \hat
U$. 



\begin{figure}[htb]
   \centering
   \includegraphics{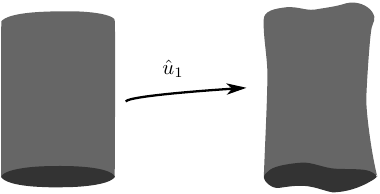}
\caption{The extension  $\hat u_1:\hat U_1\to \R^3$.\label{fig:2}}
\end{figure}

\begin{figure}[htb]
   \centering
   \includegraphics{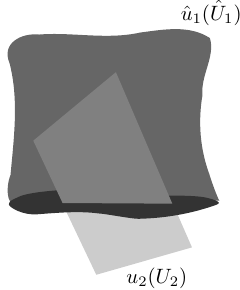}
   \caption{An example of  interpenetration.\label{fig:3}}
\end{figure}

\begin{definition}[Interpenetration]
\label{def:simple_interpenetration}
For $i\in\{1,2\}$, let $U_i\subset \R^{n-1}$ be simply connected
  Lipschitz domains and $u_i\in
{\mathrm{Lip}}(U_i,\R^{n})$. We say that $u_2$ interpenetrates $u_1$  if
there exists a Lipschitz-continuous extension $\hat u_1:\hat U_1\to\R^n$ of $u_1$ with the
following properties:
\begin{itemize} 
\item[(i)] The sets
\[
\begin{split}
\big\{x\in U_2:\,u_2(x)\not\in \hat u_1(\hat U_1),\,\deg(\hat u_1,\hat
U_1,u_2(x))=k\big\}\,,\quad k\in\N\,
\end{split}
\]
have positive $\Lb^{n-1}$-measure for at least two different $k\in\N$.
\item[(ii)] The extension satisfies
\begin{equation}
\begin{split}
\hat u_1 ( \hat U_1\setminus U_1)\cap u_1(U_1)=&\emptyset\,,\\
\overline{\hat u_1 ( \hat U_1\setminus U_1)}\cap \overline{u_2(U_2)}=&\emptyset\,.
\end{split}
\label{eq:32}
\end{equation}
\end{itemize}
\end{definition}

We have depicted the extension $\hat u_1:\hat U_1\to\R^n$ in Figure~\ref{fig:2},
and the typical situation of  interpenetration in Figure~\ref{fig:3}.
\begin{example} 
   \label{example:1}
   Let $U_1=U_2=[0,1]^{2}$ and $v_i:C^{1}(U_i)$ for $i=1,2$. 
   Moreover, suppose that the set $A_{1}:=\insieme{x:\ v_{1}(x)<v_{2}(x)}$  and that $A_{2}:=\insieme{x:\ v_{2}(x)<v_{1}(x)}$ are both  open, non-empty and simply connected. 
   Set $u_{i}(x')=(x',v_{i}(x'))$ for $i=1,2$, and define the extension of $u_1$
   by $\hat u_1:\hat U_1=\partial [0,1]^3\to\R^3$,  
    $(x',x_{3})\mapsto(x',v_{1}(x')+x_{3}M)$, where $M:=\sup |v_{2}-v_{1}|$. 
   We will now check that $\deg(u_1,\hat{U}_{1},z)=0$ for $z\in A_{2}$ and that there exists $z\in A_{1}$ such that $\deg(u_1,\hat{U}_{1},z)>0$. 
   Indeed, there is an obvious way  to extend $\hat u_1$ to $[0,1]^3$:
\[
\hat u_1^*:[0,1]^3\to \R^3, \quad  (x',x_{3})\mapsto(x',v_{1}(x')+x_{3}M)\,.
\]
Trivially, $\hat u_1=\hat u_1^*$ on $\hat U_1$. Hence, $\deg(\hat u_1,\hat
U_1,\cdot)=\deg(\hat u_1^*,\hat U_1,\cdot)$. For $\hat u_1^*$, we may compute
the degree by (cf.~Section~\ref{sec:Brouwer})
   \begin{equation*} 
      \begin{split}
         \deg(\hat{u}_{1}^*,\hat{U}_1,z)= \sum_{x\in \hat{u}_1^{-1}(z)} \sgn(\det(\nabla \hat{u}_{1}^*(x)))\,.
      \end{split}
   \end{equation*} 
 For every $z\in \R^{3}$, we  have that $\left(\hat{u}_1^*\right)^{-1}(z)$ is either empty or has one element. 
 In the latter case, it holds
 \begin{equation*} 
    \begin{split}
       \det(\nabla \hat{u}_{1}^*(x)) = \det \left( \begin{array}{cc} 
             \id &{\mathbf 0}\\
             (\nabla' v)^{T}&1
       \end{array} 
    \right) = 1,
    \end{split}
 \end{equation*} 
 where $\hat{u}_{1}^*(x)=z$. In particular for any $x\in 
 A_{2}$, one has that $\deg(\hat{u}_{1}^*,\hat{U}_{1},u_{2}(x))=0$. 
 On the other side, one easily sees that for every $x\in A_{1}$ one has that
 \begin{equation*} 
    \begin{split}
       \deg(\hat{u}_{1}^*,\hat{U}_{1},u_{2}(x)) =1.
    \end{split}
 \end{equation*}
Thus $u_2$ interpenetrates $u_1$.
\end{example} 

\begin{remark} \ 
   \begin{enumerate}
      \item Definition~\ref{def:simple_interpenetration}  is  asymmetric with respect to $u_1$,
         $u_2$. This is done on purpose.  It is always possible to reverse the roles by shrinking the domain of $U_1$, but we are neither  going to prove nor use this fact.
         
      \item If $U$ is closed and $u:U\to \R^3$ is an embedding, then there do not exist disjoint subsets
         $U_1,U_2\subset U$ such that $u_2:=u|_{U_2}$ interpenetrates $u_1:=u|_{U_1}$. The converse is not true: there exist non-injective maps
         $u:U\to \R^3$  such that it is not possible to choose $U_1,U_2$ such
         that $u_2$ interpenetrates $u_1$ (defined as before). This is the
         case, e.g., if the graphs $u(U_1)$ and $u(U_2)$ touch, but do not intersect. 
         This is a  desirable feature of a definition for interpenetration of matter. 
         Indeed,  two surfaces that are touching but can be separated via infinitesimal perturbations, should not be considered as interpenetrating, as they can be approximated by recovery sequences with disjoint graphs. 

      \item Even though we chose the case of intersecting graphs to illustrate Definition~\ref{def:simple_interpenetration} in Example~\ref{example:1}, the definition is much more  flexible than that. 
        In particular, it is invariant under surface reparametrization. 
\item It might seem at first sight as if the requirement that the sets
\[
\big\{x\in U_2:\,u_2(x)\not\in \hat u_1(\hat U_1),\,|\deg(\hat u_1,\hat
U_1,u_2(x))|=k\big\}
\]
have positve measure for $k\in\{0,1\}$
would be equivalent to Definition \ref{def:simple_interpenetration} (i). However this would exclude cases such as the one depicted in Figure \ref{fig:curve} (where $n=2$), which are also covered by Definition \ref{def:simple_interpenetration} (i).
   \end{enumerate}

 \end{remark}

\begin{figure}[htb]
   \centering
   \includegraphics{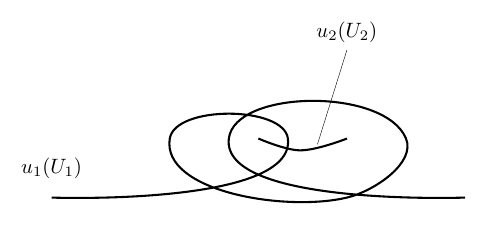}
   \caption{Interpenetrating curves with $\deg(\hat u_1,\hat U_1,u_2(x))\neq 0$ for all extensions $\hat u_1$ of $u_1$ and $x\in U_2$. \label{fig:curve}}
\end{figure}


\subsection{Statement of the main theorem}

\label{sec:mainthm}
Let $S\subset\R^2$ be open and bounded, and let $\Omega_h=S\times
(-h/2,h/2)$. We write $\Omega\equiv \Omega_1$. We will consider sequences of functions $z_h:\Omega_h\rightarrow
\R^3$. It is convenient to define them on the same domain by introducing
$y_h:\Omega\rightarrow \R^3$ via 
$y_h(x_1,x_2,x_3)=z_h(x_1,x_2,hx_3)$.
Also, we
introduce the scaled gradient
\begin{equation}
\nabla_h y=(\nabla' y,\frac{1}{h}\partial_3y)\,.\non
\end{equation}

\begin{theorem}
   \label{mainthm}
   Let $S$, $\Omega_h$ and $\Omega$ be as above, and let $U_1,U_2\subset S$ be disjoint simply connected Lipschitz sets.  Let  $u_1:U_1\to\R^3$,
   $u_2:U_2\to\R^3$ be Lipschitz and let $u_2$ interpenetrate $u_1$, let $\e>0$ and  $y_h$  a
   sequence in $W^{1,2}(\Omega,\R^3)$ such that   
   \begin{equation}
      \|\dist(\nabla_hy_h,{\mathrm{SO}}(3))\|_{L^2(\Omega)}^2<C h^{1+\e}\label{eq:8}
   \end{equation}
   and 
   \begin{align}
      \fint_{-1/2}^{1/2}  y_h(\cdot,x_3)\d x_3\wto u_i \quad\text{in }W^{1,2}(U_i,\R^3)\text{as }h\to 0\text{ for }i=1,2\,.\label{eq:7}
   \end{align}
   Then, for $h$ small enough, $y_h$ is not invertible almost everywhere.
\end{theorem}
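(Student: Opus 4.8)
The plan is to reduce the three--dimensional problem to a statement about the limiting midsurface, to establish the required self--intersection of the midsurface by a degree--theoretic homotopy argument, and then to transfer that intersection back to the thin deformed bodies with the help of geometric rigidity; throughout, the Lipschitz approximation of Sobolev maps is what makes the degree manipulations legitimate. Concretely, I would first apply the quantitative geometric rigidity estimate of \cite{MR1916989}, in the form with a thickness--independent constant (allowing a rotation field depending on $x'$), to \eqref{eq:8}, obtaining $R_h$ with $\|\nabla_hy_h-R_h\|_{L^2(\Omega)}^2\le Ch^{1+\e}$. Since $\partial_3y_h=h(\nabla_hy_h)e_3$, this gives $\|\partial_3y_h\|_{L^2(\Omega)}\to0$, so by a Poincar\'e inequality in $x_3$ every slice $y_h(\cdot,x_3)$ differs from the average in \eqref{eq:7} by $o(1)$ in $L^2(U_i)$; with \eqref{eq:7} and Rellich's theorem this yields $y_h(\cdot,x_3)\to u_i$ in $L^2(U_i)$, uniformly in $x_3$. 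Since $|\nabla y_h|\le|\nabla_hy_h|\le\sqrt3+\dist(\nabla_hy_h,{\rm SO}(3))$, the set $\{|\nabla y_h|>2\sqrt3\}$ has measure $\le Ch^{1+\e}$, so the Lipschitz truncation of Sobolev functions provides $\tilde y_h\in C^{0,1}(\Omega,\R^3)$ with Lipschitz constant bounded uniformly in $h$ and $\Lb^3(\{y_h\neq\tilde y_h\})\le Ch^{1+\e}\to0$; the bookkeeping above is stable under truncation, so $\|\partial_3\tilde y_h\|_{L^2(\Omega)}\to0$, and, using the uniform Lipschitz bound and Arzel\`a--Ascoli, the midsurface traces $v_{h,i}:=\tilde y_h(\cdot,0)|_{U_i}$ converge to $u_i$ uniformly on $\overline{U_i}$.

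Next I would realize the interpenetration at the level of the midsurface, using the extension $\hat u_1$ from Definition \ref{def:simple_interpenetration}. Since $v_{h,1}\to u_1=\hat u_1|_{U_1\times\{0\}}$ uniformly, put $\hat v_{h,1}(x',t):=\hat u_1(x',t)+(1-t)\bigl(v_{h,1}(x')-u_1(x')\bigr)$; this is a continuous extension of $v_{h,1}$ to $\hat U_1$ with $\|\hat v_{h,1}-\hat u_1\|_{C^0(\hat U_1)}\to0$. The set $\hat u_1(\partial\hat U_1\setminus U_1)$ is compact and, by \eqref{eq:32}, disjoint from the compact set $\overline{u_2(U_2)}$, so the same disjointness holds, with a uniform margin, between $\hat v_{h,1}(\partial\hat U_1\setminus U_1)$ and $\overline{v_{h,2}(U_2)}$ once $h$ is small. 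Combining this with the $C^0$--stability of the Brouwer degree in both the map and the evaluation point --- which follows from the local constancy of $\deg$ off the image of the boundary, \eqref{eq:9} --- one gets that for $h$ small the sets
\[
A_{h,k}:=\bigl\{x\in U_2:\ v_{h,2}(x)\notin\hat v_{h,1}(\partial\hat U_1),\ \deg(v_{h,2}(x),\hat U_1,\hat v_{h,1})=k\bigr\}
\]
have positive $\Lb^2$--measure for the same two values $k=k_1\neq k_2$ as for $u_1,u_2$. Since the degree changes across the relative boundary in $U_2$ of $A_{h,k_1}$, \eqref{eq:9} together with \eqref{eq:32} forces $v_{h,2}$ to send a set separating two positive--measure pieces of $U_2$ into $v_{h,1}(U_1)$; this is a self--intersection of the midsurface trace over $U_1$ with that over $U_2$ that is stable under small perturbations.

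Finally, I would pass back to three dimensions. By the first step, $\tilde y_h$ restricted to each sub--cylinder $U_i\times(-1/2,1/2)$ essentially collapses the $x_3$--direction --- it is close in $L^2(\Omega)$ to the $x_3$--independent map $(x',x_3)\mapsto v_{h,i}(x')$ --- and has Jacobian of size $\sim h$ away from the exceptional set, so that its image is, up to a set of measure $O(h^{1+\e})$, a thin neighbourhood of $v_{h,i}(U_i)$. The robust intersection of the midsurface traces from the previous paragraph then forces $\tilde y_h(U_1\times(-1/2,1/2))\cap\tilde y_h(U_2\times(-1/2,1/2))$ to have positive $\Lb^3$--measure, of an order strictly larger than $h^{1+\e}$. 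As $U_1\times(-1/2,1/2)$ and $U_2\times(-1/2,1/2)$ are disjoint, the area formula (valid for the Lipschitz map $\tilde y_h$) shows that $\tilde y_h$ --- and therefore $y_h$, which coincides with $\tilde y_h$ outside a set of measure $\le Ch^{1+\e}$ --- is not invertible almost everywhere for $h$ small.

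The main obstacle is exactly this last transition. One must convert the purely topological (degree--theoretic) intersection of the midsurfaces into a quantitative lower bound on the $\Lb^3$--measure of the overlap of the two thin deformed sub--cylinders, and this bound has to dominate the measure $Ch^{1+\e}$ of $\{y_h\neq\tilde y_h\}$, so that genuine collision points of $y_h$ --- not merely of its truncation --- survive and cannot be deleted by removing a null set. Ruling out that the exceptional set of the truncation is concentrated precisely along the self--intersection, and tracking how the smallness of the Jacobian ($\sim h$) interacts with the transversality of the limiting surfaces, is where the rigidity estimate and the asymmetry deliberately built into Definition \ref{def:simple_interpenetration} are genuinely needed.
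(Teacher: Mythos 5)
Your first two steps track the paper's Steps 1--3 closely: Lipschitz truncation (the paper uses Zhang's lemma, you invoke the standard Lipschitz truncation, essentially the same tool), degree stability under $C^0$ perturbation, and the conclusion that the self-intersection persists for the midsurface traces $v_{h,1},v_{h,2}$ once $h$ is small. The extension $\hat v_{h,1}$ by linear interpolation is a legitimate variant of the paper's cutoff construction in \eqref{uhathdef}, and the use of \eqref{eq:32} to keep the boundary image away from $\overline{u_2(U_2)}$ with a uniform margin is exactly the right observation.

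The genuine gap is the final transition back to three dimensions, which you flag but do not close. The statement that the image of $\tilde y_h|_{U_i\times(-1/2,1/2)}$ is ``a thin neighbourhood of $v_{h,i}(U_i)$'' and that ``the robust intersection of the midsurface traces then forces $\tilde y_h(U_1\times(-1/2,1/2))\cap\tilde y_h(U_2\times(-1/2,1/2))$ to have positive $\Lb^3$-measure, of an order strictly larger than $h^{1+\e}$'' is precisely what needs to be proved and has no quantitative content as written. A midsurface intersection of positive $\Lb^2$-measure gives no lower bound on the overlap volume by itself: the exceptional set $\{y_h\neq\tilde y_h\}$, of measure $\sim h^{1+\e}$ in $\Omega$, could in principle be concentrated exactly along the intersection, and the Jacobian of $\tilde y_h$ is only $O(1)$, so even the overlap volume is not controlled from below without more structure. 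The paper's resolution has two ingredients you are missing. First, the degree-theoretic intersection set is upgraded to a lower bound on the $1$-capacity of the set
$F_h$ where the trace is two-to-one with separation $>2h$, via the isocapacitary inequality (Lemma~\ref{lemma:isoperimetric-capacity}); capacity, unlike Lebesgue measure, survives the covering argument at scale $h$. Second, Proposition~\ref{prop:05291369832965} covers $F_h$ by $\sim 1/h$ balls of radius $h/2$, splits them into ``good'' and ``bad'' according to a local energy criterion (at least half are good by pigeonhole), and on each good pair of balls applies the FJM rigidity estimate at scale $h$ to show $y_h$ is within $o(h)$ of two affine isometries whose images overlap; homotopy invariance of the degree then yields a non-injectivity set of measure $\sim h^3$ per ball, giving total measure $\sim h^2$ (in $\Omega_h$) which beats the exceptional-set measure $Ch^{2+\e}$. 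Without this capacity-plus-covering-plus-local-rigidity mechanism --- or an equivalent quantitative argument --- the proof does not go through; the issue is not a technicality but the core of the theorem, and the paragraph where you ``finally pass back to three dimensions'' is where the real work lies.

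A minor secondary point: the rigidity theorem cited (\cite[Theorem~3.1]{MR1916989}) yields a single rotation per Lipschitz domain, not a rotation field $R_h(x')$; the field version exists but is a consequence that the paper avoids, obtaining the needed local control instead by applying the scalar version ball-by-ball inside Proposition~\ref{prop:05291369832965}.
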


\noindent
\begin{remark}
The crucial assumption here is \eqref{eq:8}. This condition (or, more precisely, its
2-dimensional analog)  is not fulfilled by the pathological examples from \cite{MR1346364}, whereas it does hold true for recovery sequences in the derivation of plate theories by $\Gamma$-convergence.
\end{remark}


\section{Preliminaries}
\label{setting}

For $A\subset\R^n$, we recall  the definitions of $m$-dimensional Hausdorff and spherical
Hausdorff pre-measures and of the ``packing measure'',
\begin{align*}
   \H^m_\delta(A)=&\inf\bigg\{\omega(m)\sum_j 2^{-m}\diam(A_j):A\subset \cup_jA_j,\, \diam(A_j)/2\leq\delta\bigg\}\\
   \S^m_\delta(A)=&\inf\bigg\{\omega(m)\sum_j r_j^m:A\subset \cup_jB(x_j,r_j),\, r_j\leq\delta\bigg\}\\
   \mathcal P_{\delta}^m(A)=& \,\omega(m) \delta^m \inf \bigg\{ \# \{B(x_i,\delta)\}:\ 
   \bigcup_{i} B(x_i,\delta) \supset A \bigg \}
\end{align*}
where $m\in[0,\infty)$.  In the above definition, we also allow $\delta=\infty$.\\
It is well known (see e.g. \cite{MR0257325}) that the limits $\lim_{\delta\to
  0}\H^m_\delta$, $\lim_{\delta\to 0}\S^m_\delta$ define Borel
measures $\H^m,\S^m$ on $\R^n$, and that there exists a numerical constant $C=C(n)$ such
that
\[
   C^{-1}\S^m_\delta(A)\leq \H^m_\delta(A)\leq C\S^m_\delta(A)\quad \text{and} \quad
   \Pcal_{\delta}^m \geq \S^m_{\delta}(A) \geq  \H^{m}_{\infty}
\]
for every $A\subset \R^n$. Also, we recall the definition of the 1-capacity of a
set $A\subset\R^n$,
\[
   \kap_1(A)=\inf\{\text{Per}(E):\ E \text{ is an open set of finite perimeter and } A\subset E\}\,.
\]
From these definitions, it is easily seen that there exists a constant $C=C(n)$
with the propert
\begin{equation}
  \label{eq:35}
  \kap_1(A)\leq C \H^1_\infty(A)\quad\text {for all } A\subset \R^n\,.
\end{equation}
We cite the relative isoperimetric inequality for sets of finite perimeter. In
the following statement, for a set of finite perimeter $E$, $\partial_* E$
denotes the reduced boundary of $E$ (see~\cite{MR1014685}).
\begin{theorem}[{\cite{MR1014685}}]
   \label{thm:isoperimetric_ineq}
   Let $U\subset\R^n$ be a bounded open set with Lipschitz boundary. Then there exists a
   constant $C=C(U)$ such that for every set $E\subset \R^n$  of finite perimeter, 
   \begin{equation} 
      \label{eq:isoperimetric_ineq_local}
      \begin{split}
         \min\left\{|E\cap U|,|U\setminus E|\right\}^{n-1/n}
         \leq C \H^{n-1}(\partial_* E\cap U)\,.
      \end{split}
   \end{equation} 
   The same inequality holds true if one considers instead of $U$ the whole $\R^{n}$. Namely, there exists a constant $C=C(n)$ such that 
   \begin{equation} 
      \label{eq:isoperimetric_global}
      \begin{split}
         \min\left\{|E|,|\R^{n}\setminus E|\right\}^{n-1/n}
         \leq C \H^{n-1}(\partial_* E)\,.
      \end{split}
   \end{equation} 
\end{theorem}

Using the previous theorem, we will now prove a version of the isoperimetric 
inequality involving capacities instead of the Hausdorff measure.  Note that 
due to $\kap_{1} \leq C \hausd^{d-1} $ the next lemma is stronger than Theorem~\ref{thm:isoperimetric_ineq}. 
\begin{lemma}
   \label{lemma:isoperimetric-capacity}
   Let $U$ be a bounded open set with Lipschitz boundary. Then there exists a constant $C=C(U)$ such that for every bounded set $E$ of finite perimeter, 
		\begin{equation}
         \label{eq:isoperimetric_cap}
			\begin{split}
            \left(\min \left(|E\cap U |, |U\setminus E|\right)\right)^{(n-1)/n}	\leq C 
            \kap_1(\partial_{*} E\cap U)\,.
			\end{split}
		\end{equation}
\end{lemma}
\begin{proof}

   Suppose that the claim of the lemma were not true. Then there exists a sequence of sets $E_{k}\subset \R^{n}$ such that 
		\begin{equation}
         \label{eq:lemma_isopercapacity1}
			\begin{split}
            \left(\min \left(|E_{k}\cap U |, |U\setminus E_k|\right)\right)^{(n-1)/n}	\geq k 
            \kap_1(\partial_{*} E_k\cap U)\,.
			\end{split}
		\end{equation}

      Let us split the proof in two cases: either there exists an $M$ and a sequence $\insieme{E_{k}}$ such that 
     \begin{equation*} 
        \begin{split}
           \min(|E_{k}\cap U|, |U\setminus E_{k}|)^{(n-1)/n} \geq M
        \end{split}
     \end{equation*} 
      or for every sequence such that \eqref{eq:lemma_isopercapacity1}  holds, one has that 
      \begin{equation*} 
         \begin{split}
             \min(|E_{k}\cap U|, |U\setminus E_{k}|)^{(n-1)/n} \downarrow 0\text{  as }k\uparrow \infty. 
         \end{split}
      \end{equation*} 

      To deal with the first case, we will show that  it is not possible to have $\min(|E\cap U|, |U\setminus E|) > M$ and $\kap_{1} (\partial_{*}E \cap U) < \varepsilon $, with $\varepsilon $ suitably small. To show that also the second case leads to a contradiction,  we will use a spatial scaling and basically reduce  ourselves to having
      \begin{equation*} 
         \begin{split}
            \min(|E\cap U |,|U\setminus E|)=1.
         \end{split}
      \end{equation*} 

      \begin{itemize} 
         \item[Case 1.] There exists an $M$ and subsequence $\insieme{E_{k}}$ such that $\min(|E_{k}\cap U|, |U\setminus E_{k}|)^{(n-1)/n} \geq M$. The boundedness of $U$ implies that $\kap_{1}(\partial_{*}E_k \cap U)\leq \frac{|U|^{n/(n-1)}}{ k}$.  In particular, one has that $\kap_{1}(\partial_{*} E_{k}\cap U)\downarrow 0$.  
            Fix $\varepsilon>0$ sufficiently small. By the definition of 1-capacity, there exists an open set of finite perimeter  $V_{k}$ such that $\partial_*E_{k}\cap U\subset V_{k}$ and $\per(V_{k})\leq \kap_1(\partial_{*}E_{k} \cap U) +\varepsilon \leq 2\varepsilon$.   
            Using  the second part of Theorem~\ref{thm:isoperimetric_ineq}, one has that 
            $|V_{k}|\leq C\varepsilon^{n/(n-1)}$.  Let us denote by
            $\tilde{E}_{k}:=E_{k}\cup V_{k}$.   We claim that
            \begin{equation}
               \partial_{*} \tilde{E}_{k} \cap U\subset \partial_{*}V_{k}\,.\label{eq:19}
            \end{equation}
            Indeed, note that $\partial_{*}(E _{k} \cup V_{k})\cap U \subset (\partial
            _{*}E_{k}\cup \partial_{*}V_{k})\cap U$. By $\partial_{*}E_{k}\cap U \subset
            V_{k}$, one has that every $x\in \partial_{*}E_{k}$ is an interior point (and in
            particular a set of $1$-density, see \cite{MR1014685}), and thus $x\not \in \partial_{*}(E_{k}\cup V_{k})\cap U$ which proves \eqref{eq:19}.

            Hence,
            \begin{equation}
               \label{eq:21} 
               \begin{split}
                  \min \left(|E_{k}\cap U|,|U\setminus E_{k}|\right)&\leq  C \min
                  \left(|\tilde{E_{k}}\cap U|,|U\setminus \tilde{E}_{k}|\right)+C\varepsilon^{n/(n-1)}\\
                  &\leq   C(U)\left( \left(\hausd^{n-1}(\partial_* \tilde{E}_{k}\cap U_{k})\right)^{n/(n-1)}+\varepsilon^{n/(n-1)}\right) \\
                  &\leq  C(U)\left(\left(\per(V_{k})\right)^{n/(n-1)}+\varepsilon^{n/(n-1)}\right) \\
                  &\leq  C(U)\left(\varepsilon^{n/(n-1)}\right),
               \end{split}
            \end{equation} 
            where in the second inequality above, we have used Theorem~\ref{thm:isoperimetric_ineq}. By the arbitrariness of $\varepsilon $, we obtain a
            contradiction.  

         \item[Case 2.] Let us now suppose that for every sequence ${E_{k} }$ such that \eqref{eq:lemma_isopercapacity1} holds, one has that $\min(|E_{k}\cap U |, |U\setminus E_{k} |)\downarrow 0$. 
            Without loss of generality we may assume that $\min(|E_{k}\cap U|, |U\setminus E_{k}|) = |E_{k}\cap U|$. 
            Note that both sides of \eqref{eq:isoperimetric_cap} have the same spatial scaling. 
            Thus, there exists $\lambda_{k} >0$ such that $|\lambda_{k}E_{k}| =1$ and
            \begin{equation*} 
               \begin{split}
                  |\lambda_{k}E_{k} |^{(n-1)/n} \geq k \kap_1( (\lambda_{k}\partial_{*}E\setminus)\cap (\lambda_{k}U)).
               \end{split}
            \end{equation*} 
            Hence, by rescaling  by $\lambda_{k}$ one can assume without loss of generality that $|E_{k}|=1$ and $U_{k}\uparrow \R^{d}$, where $U_{k}:= \lambda_{k}U$. 

            After this observation the proof will proceed in a similar fashion as in the first case. 
            As in the previous case, one has that $\kap_{1}(\partial_{*}(E_{k}\cap U_{k}))\downarrow0$. 
            Using the definition of $1$-capacity, there exists an open set of finite perimeter $V_{k}$ such that $\partial_*(E_{k}\cap U_{k})\subset V_{k}$ and $\per(V_{k})\leq \kap_1(\partial_{*}(E_{k}\cap  U_{k}))+\varepsilon \leq 2\varepsilon$.   
            Without loss of generality, we may assume additionally that $|V_{k}|<+\infty$. Indeed, let $B\supset U$ be a ball, and $B_{k}:=\lambda_{k}B $. Because $B_{k} $ is convex one has that $\per(B_{k}\cap V_{k})\leq \per(V_{k})$ thus by taking $V_{k}\cap B_{k}$ instead of $V_{k}$, one has the additional requested property. \\
            Using the second part of Theorem~\ref{thm:isoperimetric_ineq}, one has that $|V_{k}|\leq C \varepsilon^{n/(n-1)}$. Denote by $\tilde{E}_{k}:= E_{k}\cup V_{k}$ and notice as before that $\partial_{*}\tilde{E}_{k}\subset \partial_{*} V_{k}$. 
            Hence, following exactly the same chain of inequalities as in \eqref{eq:21},
            this gives a contradiction as before. 
      \end{itemize} 
\end{proof}

\subsection{Miscellaneous results from the literature}

In the proof of our main theorem, we will use the following geometric rigidity result. 
\begin{theorem}[{\cite[Theorem~3.1]{MR1916989}}]
   \label{thm:rigidity}
   Let $U\subset \R^n$ be a bounded Lipschitz domain, with $n\geq 2$. Then there exists a constant $C=C(U)$ with the following property:
   For every $v\in W^{1,2}(\R^n)$, there is an associated rotation $R\in \son$
   such that,
   \begin{equation*}
      \begin{split}
         \|\nabla v -R \|_{L^2(U)}\leq C \|\dist(\nabla v,\son)\|_{L^2(U)}
      \end{split}
   \end{equation*}
   The constant $C(U)$ is invariant under rescaling of the domain.
\end{theorem}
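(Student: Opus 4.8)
The inequality to be proved is scale invariant: if $v_\lambda(x):=\lambda\,v(x/\lambda)$ then $\nabla v_\lambda(x)=\nabla v(x/\lambda)$, so for $(\lambda U,v_\lambda)$ both sides equal $\lambda^{n/2}$ times the corresponding quantities for $(U,v)$; this explains the rescaling invariance of the constant and lets me assume $\diam U\approx 1$. The plan is to reduce to the case where $U=Q$ is a cube, with $C=C(n)$. Since $\partial U$ is Lipschitz, $\overline U$ can be covered by at most $N=N(U)$ cubes $Q_1,\dots,Q_N$, contained in a fixed neighbourhood of $U$, arranged so that consecutive cubes overlap in a cube of comparable side length. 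Granting the cube estimate, with associated rotations $R_j$, one has for neighbouring cubes $|R_i-R_j|\,|Q_i\cap Q_j|^{1/2}\le\|\nabla v-R_i\|_{L^2(Q_i)}+\|\nabla v-R_j\|_{L^2(Q_j)}\le C\,\|\dist(\nabla v,\son)\|_{L^2(U)}$, and chaining through the covering (which is connected because $U$ is) yields a single rotation $R$ valid on all of $U$. So it remains to treat a cube.

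On a unit cube $Q$, set $e:=\|\dist(\nabla v,\son)\|_{L^2(Q)}$. If $e\ge 1$ there is nothing to prove: for any fixed $R\in\son$ and a measurable nearest-rotation selection $P(\nabla v)$,
\[
\|\nabla v-R\|_{L^2(Q)}\le\|\nabla v-P(\nabla v)\|_{L^2(Q)}+\|P(\nabla v)-R\|_{L^2(Q)}\le e+\diam\son\le(1+\diam\son)\,e .
\]
So assume $e$ small. The next step is a Lipschitz truncation. Since $|\nabla v|\le\dist(\nabla v,\son)+\sqrt n$ pointwise, the set $\{M|\nabla v|>2\sqrt n\}$, with $M$ the Hardy--Littlewood maximal operator, is contained in $\{M(\dist(\nabla v,\son))>\sqrt n\}$, which by Chebyshev's inequality and the $L^2$-boundedness of $M$ has measure $\le C(n)\,e^2$; the standard maximal-function truncation then supplies $w\in W^{1,\infty}(Q,\R^n)$ with $\|\nabla w\|_{L^\infty}\le L:=C(n)$, coinciding with $v$ together with their gradients off a set of measure $\le C(n)\,e^2$, with $\|\nabla v-\nabla w\|_{L^2(Q)}\le C(n)\,e$, and (since $\dist(\cdot,\son)$ is $1$-Lipschitz) $\|\dist(\nabla w,\son)\|_{L^2(Q)}\le C(n)\,e$. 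As $\inf_{R\in\son}\|\nabla v-R\|_{L^2(Q)}\le\inf_{R\in\son}\|\nabla w-R\|_{L^2(Q)}+\|\nabla v-\nabla w\|_{L^2(Q)}$, it suffices to prove the estimate for a Lipschitz map with gradient bounded by $L$ (and, if it helps the next step, one may further mollify it on a slightly smaller cube at the cost of an error in the defect tending to $0$).

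The remaining core is: for $\phi\in W^{1,\infty}(Q,\R^n)$ with $\|\nabla\phi\|_{L^\infty(Q)}\le L$ and $\eta:=\|\dist(\nabla\phi,\son)\|_{L^2(Q)}$ small, one has $d(\phi,Q):=\inf_{R\in\son}\|\nabla\phi-R\|_{L^2(Q)}\le C(n,L)\,\eta$. I would obtain this by combining a compactness step with the linear Korn inequality. Compactness (qualitative rigidity): given $\phi_k$ as above with $\eta_k\to0$ (and, WLOG, $\fint_Q\phi_k=0$), the gradients $\nabla\phi_k$ are bounded in $L^\infty$; writing $\nabla\phi_k=P(\nabla\phi_k)+e_k$ with $P(\nabla\phi_k)\in\son$ and $\|e_k\|_{L^2}=\eta_k\to0$, along a subsequence $\nabla\phi_k\wto F=\nabla\phi$ in $L^2$ with $F(x)\in\overline{\mathrm{conv}}\,\son\subset\{\,|X|_{\mathrm{op}}\le1\,\}$ a.e.; from the $L^\infty$ bound $|\det\nabla\phi_k-1|=|\det\nabla\phi_k-\det P(\nabla\phi_k)|\le C(L)\,\dist(\nabla\phi_k,\son)$, hence $\det\nabla\phi_k\to1$ in $L^1$ and, by weak continuity of the Jacobian determinant, $\det F=1$ a.e.; but $|F|_{\mathrm{op}}\le1$ and $\det F=1$ force $F\in\son$ a.e., so by the classical Liouville rigidity theorem $F\equiv\bar R\in\son$ is constant. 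Since $|P(\nabla\phi_k)|=\sqrt n=|\bar R|$ a.e., the $L^2$-norms converge, so $\nabla\phi_k\to\bar R$ strongly in $L^2$ and $d(\phi_k,Q)\to0$; hence $d(\phi,Q)\le\omega(\eta)$ with $\omega(\eta)\to0$. Quantitative step: apply Korn's inequality $\inf_{A\text{ skew}}\|\nabla u-A\|_{L^2(Q)}\le C\|\sym\nabla u\|_{L^2(Q)}$ to $u:=\bar R^{-1}\phi-\id$ for $\bar R$ a near-optimal constant rotation, so that $\bar R^{-1}\nabla\phi=\id+\nabla u$; using the expansion $\dist(\id+G,\son)=|\sym G|+O(|G|^2)$ (valid where $|G|$ is below a fixed threshold, with an $L^\infty$-controlled contribution from the small set where it is not) one gets $\|\sym\nabla u\|_{L^2(Q)}\lesssim\eta+(\text{quadratic and small-set terms in }\nabla u)$, and these must be absorbed into the left-hand side — which is done by feeding back the a priori smallness of $\|\nabla u\|_{L^2(Q)}=d(\phi,Q)$ and iterating the estimate over dyadic subcubes, to conclude $d(\phi,Q)\le C(n,L)\,\eta$.

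The step I expect to be the main obstacle is precisely this absorption of the nonlinear error in the quantitative step: a single application of Korn's inequality leaves a term whose coefficient in front of $d(\phi,Q)$ is not smaller than $1$, so one genuinely needs the multiscale iteration, combined with the compactness-derived smallness, to close the argument; this interplay of the nonlinear defect, the linear Korn inequality, and Liouville rigidity is the technical heart of the theorem. Everything else — the covering and chaining, the trivial large-defect case, the maximal-function truncation, and the reduction to bounded (or smooth) gradients — is routine.
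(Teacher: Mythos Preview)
The paper does not prove this theorem: it is quoted as a preliminary result from the literature (Friesecke--James--M\"uller, \cite{MR1916989}) and used as a black box in the proof of Proposition~\ref{prop:05291369832965}. So there is no ``paper's own proof'' to compare against.

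That said, your sketch follows the overall architecture of the original FJM argument fairly closely: scale invariance, reduction to a cube via a covering/chaining argument, the trivial large-defect case, and the maximal-function Lipschitz truncation are all present in \cite{MR1916989} in essentially the form you describe. The point where your outline diverges from FJM is precisely the one you flag as the obstacle. Your proposed route --- compactness to get qualitative smallness $d(\phi,Q)\le\omega(\eta)$, then Korn plus absorption of the quadratic error --- is plausible in spirit, but the bookkeeping is delicate: Korn gives $\inf_{A\ \mathrm{skew}}\|\nabla u-A\|_{L^2}\le C\|\sym\nabla u\|_{L^2}$, and passing from the optimal skew $A$ back to an improved rotation introduces a term of order $|A|^2$, so the inequality you would like to close has the schematic form $d\lesssim\eta+d^2$ (or $d\lesssim\eta+\omega(\eta)\,d$), and making this rigorous across scales is not automatic. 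FJM sidestep this by a different mechanism: they exploit the algebraic identity $\mathrm{cof}\,F=F$ on $\son$ together with the Piola identity $\mathrm{div}(\mathrm{cof}\,\nabla\phi)=0$ to show that a Lipschitz $\phi$ with small defect is close to a harmonic map, and then use interior estimates for harmonic functions to linearize before invoking Korn. This harmonic-approximation step is what replaces your ``multiscale iteration'' and is the technical heart of their proof.
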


We will also use Zhang's Lemma \cite{MR1205403}. An inspection of its
proof in the latter reference shows that the following (slightly modified) statement holds true as well. 
\begin{theorem}[\cite{MR1205403}, Lemma 3.1]
   \label{thm:zhang}
   Let $K>0$. 
   There exist constants $C_1=C_1(n,m)$, $C_2=C_2(n,m,K)$ with the following property: 
   If $U\subset \R^n$ is open and bounded, $f\in W^{1,1}(U,\R^m)$ and $\e>0$ such that
   \[
      \int_{U\cap\{|\nabla f|\geq K\}}|\nabla f|\d x <\e \,,
   \]
   then there exists $\tilde f\in W^{1,\infty}(U,\R^m)$ such that
   \begin{align*}
      \|\nabla \tilde f\|_{L^\infty(U)}\leq & \,C_1K\,,\\
      {\mathcal L}^n\left(\{x:f(x)\neq \tilde f(x)\}\right)\leq& \,C_2 \e\,.
   \end{align*}
\end{theorem}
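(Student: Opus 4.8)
The plan is the classical Lipschitz truncation of a Sobolev map via the Hardy--Littlewood maximal operator, followed by a Lipschitz extension theorem. Decompose the gradient as $\nabla f=g+\ell$ with $\ell:=\nabla f\,\one_{\{|\nabla f|<K\}}$ and $g:=\nabla f\,\one_{\{|\nabla f|\ge K\}}$, so that $\|\,|\ell|\,\|_{L^\infty(U)}\le K$ while the hypothesis reads $\|\,|g|\,\|_{L^1(U)}<\e$. The goal is to produce a ``good set'' $G\subset U$ on which $f$ is Lipschitz with constant comparable to $K$, with $\Lb^n(U\setminus G)\lesssim\e/K$ and $U\setminus G$ contained in $\{|\nabla f|\ge K\}$ up to a null set, and then to extend $f|_G$ Lipschitz-continuously.

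For the construction I would, after (if necessary) extending $f$ to $\R^n$ — or, in order to keep the constants independent of $U$, working directly on $U$ with balls contained in $U$ — invoke the standard pointwise inequality, valid for Lebesgue points $x,y$ of $F\in W^{1,1}_{\mathrm{loc}}$,
\[
|F(x)-F(y)|\le C(n)\,|x-y|\,\bigl(M(|\nabla F|)(x)+M(|\nabla F|)(y)\bigr),
\]
which follows from telescoping averages of $F$ over dyadically shrinking balls together with Poincar\'e's inequality ($M$ denoting the maximal operator). Applied to $f$ and combined with $M(|\nabla f|)\le M(|g|)+M(|\ell|)\le M(|g|)+K$, it shows that on
\[
G:=\{x\in U:\ M(|g|)(x)\le K\}
\]
the restriction of $f$ to the Lebesgue points in $G$ is $2C(n)K$-Lipschitz, hence extends by uniform continuity to a $2C(n)K$-Lipschitz map on $\overline G$.

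Next I would extend $f|_{\overline G}$ to a map $\tilde f\colon\R^n\to\R^m$ by Kirszbraun's theorem, preserving the Lipschitz constant (or componentwise via the McShane--Whitney formula, at the cost of a factor $\sqrt m$); then $\tilde f\in W^{1,\infty}(U,\R^m)$ with $\|\nabla\tilde f\|_{L^\infty(U)}\le C_1K$, $C_1=C_1(n,m)$. Since $\tilde f=f$ on $G$, we have $\{f\ne\tilde f\}\cap U\subset U\setminus G=\{M(|g|)>K\}$, and the weak-$(1,1)$ estimate for the maximal operator gives
\[
\Lb^n\bigl(\{M(|g|)>K\}\bigr)\le\frac{C(n)}{K}\,\|\,|g|\,\|_{L^1(U)}\le\frac{C(n)}{K}\int_{U\cap\{|\nabla f|\ge K\}}|\nabla f|\dx<\frac{C(n)}{K}\,\e=:C_2\,\e,
\]
with $C_2=C_2(n,m,K)$. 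Finally, since $g\equiv 0$ on $\{|\nabla f|<K\}$, the region $G$ left untouched by the truncation is exactly $U$ minus the superlevel set of the maximal function of the excess gradient $g$; this is the precise sense carried by the first conclusion, and it is this form that is used in the applications.

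The main obstacle is to keep the constants independent of $U$, only $C_2$ being allowed to depend on $K$: this forces one to truncate relative solely to the excess part $g$ (so that neither $\|f\|_{L^1}$ nor $\|\,|\nabla f|\,\|_{L^1(U)}$ enters the measure bound) and, instead of extending $f$ to all of $\R^n$, to run the maximal-function and chaining arguments intrinsically inside $U$ — the geometry of $U$ then having to be controlled in the pointwise Lipschitz estimate, which is harmless for Lipschitz domains and for the domains occurring in the present paper. The secondary delicate point is the precise identification of the set on which $f$ is unchanged: the quick Kirszbraun extension only yields agreement on the maximal-function sublevel set $G$, and relating this sharply to $\{|\nabla f|\le K\}$ requires Zhang's finer construction, in which $\tilde f$ on the open set $\{M(|g|)>K\}$ is built from a Whitney decomposition together with an interpolation of the boundary values of $f$, whose gradient must be estimated using that $M(|g|)\lesssim K$ on the boundary of that open set; this estimate is the technical heart of the argument.
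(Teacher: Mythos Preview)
The paper does not supply a proof of this statement: it is quoted from \cite{MR1205403} with the remark that ``an examination of the proof in the latter reference shows that the following (slightly modified) statement holds true as well.'' So there is nothing in the paper to compare against beyond this pointer to Zhang's original argument.

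Your sketch is the standard Lipschitz truncation and is essentially Zhang's strategy: split off the excess gradient $g=\nabla f\,\one_{\{|\nabla f|\ge K\}}$, use the maximal function of $g$ to define the good set, appeal to the pointwise Lipschitz bound via $M(|\nabla f|)$, and extend. The weak-$(1,1)$ estimate then gives the measure bound with constant $C(n)/K$, which matches the stated dependence $C_2=C_2(n,m,K)$. You also correctly flag the two genuine subtleties. First, the quick Kirszbraun route only yields $f=\tilde f$ on $G=\{M(|g|)\le K\}$, which is in general \emph{strictly smaller} than $\{|\nabla f|\le K\}$; obtaining equality on the latter set, as the theorem states, does require the Whitney-type construction you allude to, where $\tilde f$ is built on the bad open set by interpolating boundary data and the gradient is controlled cube by cube. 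Second, keeping $C_1,C_2$ independent of $U$ is not automatic if one first extends $f$ across $\partial U$; your suggestion to run the chaining and maximal arguments intrinsically is the right instinct, though for a completely general open bounded $U$ some care is needed. For the purposes of the present paper neither subtlety matters: in Step~1 of the proof of Theorem~\ref{mainthm} only the Lipschitz bound and the measure estimate $|\{z_h\neq\tilde z_h\}|\le C h^{2+\e}$ are used, and the domains $\Omega_h$ are products of a fixed Lipschitz set with an interval, so the constants are harmless.
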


\section{Proof of Theorem~\ref{mainthm}}
\label{sec:pf_mainthm}

Let $S$, $\Omega$ and $\Omega_h$ be as defined in Section~\ref{sec:mainthm}.
Our strategy is as follows.
In Proposition~\ref{prop:05291369832965} below, we will consider maps $y_{h}$ on a $3$-dimensional domain. 
We reduce the domain to $2$ dimensions and assume that the thus obtained maps
are $2$-to-$1$ on a ``large set'' in terms of capacity, and we will show that
this is sufficient to conclude that the maps $ y_h$ are $2$-to-$1$ on a set
whose $\Lb^3$-measure is of order $h^2$. This will be the main step in the proof
of Theorem~\ref{mainthm}. In the proof of the proposition, we
will need the following lemma:

\begin{lemma}
\label{lemma:morrey}
Let $\e>0$,  $x_h \in S$, $\alpha\in(0,1/2]$, and $y_h\in
W^{1,\infty}(\Omega_h;\R^3)$ with 
\[
\begin{split}
  \int_{B(x_h,\alpha h)}\dist^2(\nabla y_h,{\mathrm{SO}(3)})\d x\leq &C h^{3+\e}\,,\\
  \|\nabla y_h\|_{L^\infty}\leq &C\,.
\end{split}
\]
Then there exist  rigid motions $A_h:\R^3\to\R^3$ such that
\begin{equation}
\sup_{x'\in B(x_h,\alpha h)} |y_h(x')-A_h(x')|\leq C h^{1+\bar \e}.\label{eq:17}
\end{equation}
where  $\bar \e=\e/(3+\e)$.
\end{lemma}
\begin{proof}
       By Theorem~\ref{thm:rigidity}, there exists a numerical constant $C=C(C_1,C_2)$ and
     $R_h\in {\mathrm{SO}(3)}$ such that
    \[
    \int_{B(x_h,\alpha h)} |\nabla y_h-R_h|^2\dx \leq C h^{3+\e}\,.
    \]
    We set
    \begin{equation*}
       b_h=\fint_{B(x_h,\alpha h)}\left(y_h(x)-R_hx\right)\dx\,.
    \end{equation*}
    By the Poincar\'e inequality, there exists $C=C(C_1,C_2)$ such that
    \begin{equation}
    \int_{B(x_h,\alpha h)} |y_h(x)-R_hx-b_h|^2\d x\leq C h^{5+\e} \,.\label{eq:10}
    \end{equation}
    Let $A_h$ be the rigid motion
    \begin{equation*}
       x\mapsto R_hx+b_h\,.
    \end{equation*}
    Since $\|y_h-A_h\|^2_{W^{1,2}(B(x_h,\alpha h))}\leq C  h^{3+\e}$, and
    $\|y_h-A_h\|^2_{W^{1,\infty}(B(x_h,\alpha h))}\leq C$, we also have
    \[
    \|\nabla(y_h-A_h)\|^p_{L^p(B(x_h,\alpha h))}\leq C(p) h^{3+\e}
    \]
    for all $p\in[2,\infty)$. \\
Let $w_h=y_h-A_h$, and $B=B(x_h,\alpha h)$. 
Using \eqref{eq:10} and H\"older's inequality, we have
\begin{equation*}
\begin{split}
\fint_B |w_h|&\leq\frac{1}{\omega(3)(h/2)^3}\left(\int_B |w_h|^2\right)^{1/2}(\omega(3)(h/2)^3)^{1/2}\\
&\leq C h^{(5+\e)/2}\,.
\end{split}
\end{equation*}
We set $p=3+\e$. For $x\in B$,  we have the following estimate (which is used in a similar fashion in the proof of Morrey's Inequality, see e.g.~the proof of the latter in \cite{gilbarg2001elliptic})
\begin{equation*}
\begin{split}
\fint_B|w_h(x)-w_h(z)|\d z\leq & C\,\int_B \frac{|\nabla w_h(z)|}{|x-z|^2}\d z\\
\leq & C \left(\int_B |\nabla w_h|^p\right)^{1/p}\left(\int_B |x-z|^{-2p/(p-1)}\right)^{(p-1)/p}\\
\leq & C h^{(3+\e)/p}h^{1-3/p} \\
\leq & C h^{1+\e/(3+\e)}\,.
\end{split}
\end{equation*}
Thus we get
\begin{equation}
\label{eq:whoh}
\sup_{x\in B}|w_h(x)|\leq  \fint_{B} |w_h|\d z
+\sup_{x\in B}\fint|w_h(x)-w_h(z)|\d z\leq C h^{1+\bar \e}
\end{equation}
which proves (\ref{eq:17}).
\end{proof}

\begin{proposition} 
   \label{prop:05291369832965}
   Let $y_h:\Omega_h\to\R^3$ be  Lipschitz, and let $C^*,\e>0$ such that
   \begin{equation*} 
      \begin{split}
         \int_{\Omega_h}\dist^{2}(\nabla y_h,{\mathrm{SO}(3)}) \leq & \,C\,h^{2+\e}\,,\\
           \qquad \|\nabla {y}_h \|\leq & \,C^*\,.
      \end{split}
   \end{equation*} 
   Further, with $ u_h(\cdot) =y_h(\cdot,0) $, and
\[
F_h:=\left\{ x: \text{ there exists }\bar x\in S \text{ s.t.\ } u_h(x)=u_h(\bar x)
  \text{ and } |x-\bar x|>2h\right\}
\]
   assume 
that 
\[
    \kap_1\big( F_h\big) \geq C_1\,\quad\text{ for all }h<h_0
\]
for some constants $C_1,h_0>0$.
   Then there exists $c=c(C_1)>0$ such that for $h$ small enough,
   \begin{equation} 
      \label{eq:measure_1-to-1}
      \begin{split}
         \leb^3 \left(\left\{x:\, y_h \text{ is not $1$-to-$1$ at $x $}\right\}\right) > ch^{2}\,.
      \end{split}
   \end{equation} 
\end{proposition} 

\begin{proof} 
   {\bf Step 1.} Covering of $F_h$ by balls of size $h$ and definition of auxiliary
   partitions. For simplicity let us denote
   \begin{equation*} 
      \begin{split}
         E_h:= &\int_{\Omega_h}\dist^{2}(\nabla y_h,{\mathrm{SO}(3)})\,. 
      \end{split}
   \end{equation*} 
   Fix a set of points $X_h=\{x_i\}_{i\in I}\subset F_h$ such that 
   $F_h\subset \cup_I B(x_i,h/2)$ and $B(x_i,h/10)\cap B(x_j,h/10)=\emptyset$ for $i\neq j$. 
   Such a set $X_h$ exists by Vitali's Covering Lemma. 
   By definition of $F_h$, for every $x\in X_h$, there exists $\bar x\in F_h$ such that $u_h(x)=u_h(\bar x)$ and $B(x,h/2)\cap B(\bar x,h/2)=\emptyset$. \\ 
   In the following, we identify the points $x\in X_h\subset S$ with the points
   $(x,0)\in S\times\{0\}\subset\Omega_h$, and whenever we speak of a ball around a
   point $x\subset X_h$, it is understood to be three-dimensional.\\ 
   \newcommand{\Xl}{X_h^{\mathrm{low}}} 
   \newcommand{\Xh}{X_h^{\mathrm{high}}} 
   \newcommand{\Xlb}{\bar X_h^{\mathrm{low}}} 
   \newcommand{\Xhb}{\bar X_h^{\mathrm{high}}} 
   Now we introduce several useful partitions of $X_h$. First, we define the set of 
   $x\in X_h$ with ``low energy'',
   \begin{equation}
      \Xl:=\left\{x\in X_h:\int_{B(x,h/10)} \dist^{2}(\nabla y_h,{\mathrm{SO}(3)}) \leq \frac{4hC_2}{C_1} E_h\right\}\,,\label{eq:28}
   \end{equation}
   where $C_2$ is the constant from \eqref{eq:35} with $n=3$.
   The complement (the set of $x\in X_h$ with ``high energy'') is denoted by
   $\Xh=X_h\setminus \Xl$.\\
   Secondly, for $x\in X_h$, we write $M(x):=B(x,h/(20 C^*))\cup B(\bar
   x,h/(20C^*))$. We define the set of $x$ with ``low pair-energy'' as
   \begin{equation}
      \Xlb:=\left\{x\in X_h:\int_{M(x)} \dist^{2}(\nabla y_h,{\mathrm{SO}(3)}) \leq \frac{4hC_2}{C_1}
         E_h\right\}\,.\label{eq:29}
   \end{equation}
   The complement (the set of $x\in X_h$ with ``high pair-energy'') is denoted by
   $\Xhb=X_h\setminus \Xlb$.\\
   Finally, we introduce the partition $X_h=\G\cup\B$ where we call
   $\G$ the set of ``good'' points and $\mathcal B$ the set of ``bad'' points.
   We define the set of ``good'' points as the union $\G=\G^1\cup \G^2$, where the
   latter are defined as follows,
   \begin{equation}
      \label{eq:24}
      \G^1=\left\{x\in \Xl:\exists x'\in \Xl,\, x\neq x', \text{ with }|y_h(x)-y_h(x')|\leq h/10\right\}\,,
   \end{equation}
   and
   \begin{equation}
      \label{eq:30}
      \G^2= \Xlb\,.
   \end{equation}
   Now we claim that
   there exists a constant
   $C_3>0$ such that for $h$ small enough,
   \begin{equation}
      \label{eq:13}
      \#\G>C_3h^{-1}
   \end{equation}
   and
   \begin{equation}
      \label{eq:12}
      \leb^3(\{x'\in B(x,h/10):y_h \text{ is not $1$-to-$1$ at
      }x'\})>C_3 h^3\,\quad\text{ for all $x\in\G$.}
   \end{equation}
   This will be enough to prove the proposition since the balls of radius
   $h/10$ and centers in $\G$ are mutually disjoint.\\
   {\bf Step 2.} Proof~of~\eqref{eq:13}.
   Recalling the relations between capacities and Hausdorff pre-measures we 
   have $ \kap_1 \leq C_2 \hausd^{1}_{\infty}\leq C_2 \mathcal{P}^{1}_{h/2} $ for some numerical constant $C_2$. 
   Hence $\mathcal{P}^{1}_{h/2}(F_h)\geq C_1C_2^{-1} $, and in particular for every
   covering of $F_h$ with balls $\insieme{B_i}$ of radius $h/2$ we have that 
   \begin{equation} 
      \label{eq:15} \begin{split}
         2\sum_{i} r(B_i) \geq C_1C_2^{-1}, 
      \end{split}
   \end{equation} 
   where $r(B)$ denotes the radius of the ball $B$.
   Applying~\eqref{eq:15} to the cover by balls with centers in $X_h$ constructed above, we get
   \begin{equation}
      \# X_h\geq \frac{1}{h} \mathcal{P}_h^1(F_h)\geq \frac{C_1}{hC_2}\,.\label{eq:23}
   \end{equation}
   By definition, $\B=X_h\setminus \G$, and hence
   \begin{equation}
      \label{eq:31}
      \B=\Bigg\{x\in \Xhb: \text{Either }\left(x\in \Xh \right)\text{ or }\left(\not \exists x'\in
         \Xl,\,x\neq x', \text{
            with }|y_h(x)-y_h(x')|\leq h/10\right)\Bigg\}\,.
   \end{equation}
   Hence we have $\B\subset \B^1\cup \B^2$ with
   \begin{equation} 
      \begin{split}
         \B^1=\Xh=\left\{x\in X_h: \int_{B(x,h/10)} \dist^{2}(\nabla y_h,{\mathrm{SO}(3)}) \geq \frac{4hC_2}{C_1} E_h\right\}
      \end{split}\,,
      \label{eq:gooddef} 
   \end{equation} 
   and
   \begin{equation}
      \label{eq:20}
      \B^2=\Bigg\{x\in \Xl:\left(x\in \Xhb\right)
      \text{ and }
      \left(\not \exists x'\in \Xl,\, x'\neq x,\, \text{ with
         }
         |y_h(x)-y_h(x')|\leq h/10\right)\Bigg\}
   \end{equation}
   By~\eqref{eq:gooddef} and the fact that the $h/10$-balls with centers in $X_h$ are
   mutually disjoint, we have
   \begin{equation}
      \# \B^1\leq \frac{C_1}{4C_2 h}.\label{eq:22}
   \end{equation}
   For $x_1,x_2 \in \B^2$, we have 
   \begin{equation}
      \begin{split}
         |\bar x_1-\bar x_2 |\geq &\frac{1}{C^*}|y_h(\bar x_1)-y_h(\bar x_2)|\\
         =&\frac{1}{C^*}|y_h( x_1)-y_h( x_2)|\\
         \geq& \frac{h}{10C^*}\,,
      \end{split}\label{eq:14}
   \end{equation}
   and hence the balls $B(\bar x,h/(20C^*))$ with $x\in \G^2$ are mutually
   disjoint.
   By the definition of $\Xhb$, this implies
   \begin{equation}
      \#\B^2\leq \frac{C_1}{2 C_2h}\,.\label{eq:16}
   \end{equation}
   Combining~\eqref{eq:23}, \eqref{eq:22} and \eqref{eq:16}, we have proved 
   \eqref{eq:13} for $C_3\leq\frac{C_1}{4C_2}$. \\
   {\bf Step 3.} Proof of \eqref{eq:12} for $x\in \G^1$. Let $x\in \G^1$. By the
   definition of $\G^1$ in \eqref{eq:24}, there exists
   $x'\in \G^1$, $x'\neq x$, with $|y_h(x)-y_h(x')|\leq h/10$.
   Let $B_h^x=B(x,h/10)$, $B_h^{x'}=B(x',h/10)$.
   The conditions of Lemma~\ref{lemma:morrey} with $\alpha=1/10$ are fulfilled for
   $y_h$ on both of these balls, and hence we obtain the existence of rigid motions
   $A_h^x,A^{x'}_h$ (depending on $x,x',h$) that satisfy
   \begin{equation}
      \sup_{z\in B_h^x}|y_h(z)-A_h^x(z)|\leq C h^{1+\bar\e}\,,\quad
      \sup_{z\in B_h^{x'}}|y_h(z)-A^{x'}_h(z)|\leq C h^{1+\bar\e}\,.\label{eq:33}
   \end{equation}
   Note that $C$ is independent of $x,x'\in\G^1$ and of
   $h$. 
   The images of $B_h^x$ and $B_h^{x'}$ under $A_h^x$ and $A^{x'}_h$ respectively
   are balls of radius $h/10$ and centers $A_h^x(x),A^{x'}_h(x')$.
   By~\eqref{eq:33},
   \begin{equation}
      \limsup_{h\to 0}\inf_{x\in \G^1}\frac1h|A_h^x(x)-A^{x'}_h(x')|\leq 1/10\,.
      \label{eq:38}
   \end{equation}
   Set
   \[
      c_0=\frac{\leb^3(B(0,1)\cap B(e_1,1))}{\leb^3(B(0,1))}\,.
   \]

   By~\eqref{eq:38}, given $\delta>0$, we may choose $h_1=h_1(\delta)$ such that for all
   $h<h_1$, there exists a set $W_h\subset B_h^x$ with
   \begin{align}
      \label{eq:37}
      \leb^3(W_h)\geq &(c_0-\delta)\leb^3(B_h^x)\\
      \label{eq:40}A_h^x(W_h)\subset &A_h^{x'}(B_h^{x'})\\
      \label{eq:41}\dist(A_h^x(W_h),A_h^{x'}(\partial B_h^{x'}))\geq &\frac{\delta h}{C}\,.
   \end{align}
   In particular, \eqref{eq:40} implies
   \begin{equation}
      \label{eq:42}
      \deg(A_h^{x'},\partial B_h^{x'},A_h^x(z))=1\quad\text{for }z\in W_h\,.
   \end{equation}
   We define homotopies $H_h^x:[0,1]\times \overline{B_h^x}\to\R^3$,
   $H_h^{x'}:[0,1]\times \overline{B_h^{x'}}\to\R^3$ by 
   \begin{equation}
      \label{eq:43}
      \begin{split}
         H^x_h(t,z)=&t y_h(z)+(1-t)A_h^x(z)\\
         H^{x'}_h(t,z)=&t y_h(z)+(1-t)A_h^{x'}(z)\,.
      \end{split}
   \end{equation}
   By \eqref{eq:33} and \eqref{eq:41}, we have (for $h$ small enough) 
   \[
      H^{x}_h(t,z)\not\in H^{x'}_h(\partial B_h^{x'}) \quad\text{ for }
      t\in[0,1],\,z\in W_h\,.
   \]
   By \eqref{eq:111} and \eqref{eq:42}, this yields
   \begin{equation}
      \label{eq:42}
      \begin{split}
         \deg(A_h^{x'},\partial B_h^{x'},A_h^x(z))=&\deg(H_h^{x'}(0,\cdot),\partial
         B_h^{x'},H_h^x(0,z))\\
         =&\deg(H_h^{x'}(1,\cdot),\partial B_h^{x'},H_h^x(1,z))\\
         =& \deg(y_h|_{B_h^{x'}},\partial B_h^{x'},y_h(z))=1\quad\text{for }z\in
         W_h\,.
      \end{split}
   \end{equation}
   By \eqref{eq:37} and the arbitrariness of $\delta$, this implies
   \begin{equation}
      \liminf_{h\to 0} \inf_{x\in \G^1}\frac{\leb^3\left(
            \left\{z\in B_h^x:\deg(y_h|_{B_h^{x'}},\partial B_h^{x'},y_h(z))=1\right\}
         \right)}{\leb^3(B_h^x)}\geq c_0\,. \label{eq:34}
   \end{equation}
   Note that $\deg(y_h|_{B_h^{x'}},\partial B_h^{x'},y_h(z))=1$ is sufficient to
   conclude that $y_h$ is not 1-to-1 at $z\in B_h^x$. Hence, \eqref{eq:34} proves
   \eqref{eq:12} for $x\in \G^1$.\\
   {\bf Step 4.} Proof of \eqref{eq:12} for $x\in \G^2$. This closely parallels the
   previous step, this time using the balls $B_h^x=B(x,h/(20C^*))$, $B_h^{\bar x}=B(\bar x, h/(20 C^*))$. As in the last step, we use Lemma~\ref{lemma:morrey} to obtain rigid motions $A_h^x, A_h^{\bar x}$ that satisfy
   \begin{equation*}
      \sup_{z\in B_h^x}|y_h(z)-A_h^x(z)|\leq C h^{1+\bar\e}\,,\quad
      \sup_{z\in B_h^{x'}}|y_h(z)-A^{\bar x}_h(z)|\leq C h^{1+\bar\e}\,.
   \end{equation*}
   Here, we even have
   \begin{equation*}
      \lim_{h\to 0}\inf_{x\in \G^2}\frac{1}{h}|A_h^x(x)- A^{\bar x}_h(\bar x)|=0,
   \end{equation*}
   and hence
   \begin{equation*}
      \lim_{h\to 0} \inf_{x\in \G^2}\frac{\leb^3\left(
            \left\{z\in  B_h^x:\deg(A^{\bar x}_h ,\partial B_h^{\bar x},A_h^x(z))=1\right\}
         \right)}{\leb^3(B_h^x)}= 1, 
   \end{equation*}
   and
   \begin{equation*}
      \lim_{h\to 0} \inf_{x\in \G^2}\frac{\leb^3\left(
            \left\{z\in  B_h^x:\deg(A^{\bar x}_h ,\partial B_h^{\bar x},A_h^x(z))=1\right\}
         \right)}{\leb^3(B_h^x)}= 1.
   \end{equation*}
   This proves~\eqref{eq:12} for $x\in \G^2$ and completes the proof of the proposition.
\end{proof}

\begin{proof}[Proof of Theorem~\ref{mainthm}.]
Let $z_h\in W^{1,2}(\Omega_h,\R^3)$ be defined by 
\[
   z_h(x',hx_3)=y_h(x',x_3)\quad\text{ for all }x'\in S, x_3\in[-1/2,1/2].
\]
By \eqref{eq:8}, 
   \begin{equation}
      \label{zhSO3}
      \|\dist(\nabla z_h,{\mathrm{SO}(3)})\|^2_{L^2(\Omega_h)}\leq C h^{2+\e}\,.
   \end{equation}
{\bf Step 1.} Approximation  by Lipschitz functions. 
Using~\eqref{zhSO3}, 
\begin{align*}
   \int_{\{|\nabla z_h|>2\sqrt{3} \}}|\nabla z_h|\d x \leq & \frac{1}{2\sqrt{3}}\int_{\{|\nabla z_h|>2\sqrt{3}\}}|\nabla z_h|^2\d x\\
   \leq & \frac{4}{2\sqrt{3}}\int_{\{|\nabla z_h|>2\sqrt{3}\}}\dist^2(\nabla z_h,{\mathrm{SO}(3)})\d x\\
   \leq & C h^{2+\e}
\end{align*}
We apply Theorem~\ref{thm:zhang} (with $K\to 2\sqrt{3}$, $f\to z_h$, $\e\to C h^{2+\e}$) and obtain $\tilde z_h\in W^{1,\infty}(\Omega_h,\R^3)$ such that
\begin{align}
\label{eq:2}
\left|\{z_h\neq\tilde z_h\}\right|\leq  & Ch^{2+\e}\\
\|\nabla \tilde z_h \|_{L^\infty(\Omega_h)}\leq & C
\end{align}

{\bf Step 2.} Extension to a sphere. By Definition~\ref{def:simple_interpenetration}, 
there exists an extension $\hat u_1:\hat U_1\to \R^3$ such that eq.~\eqref{eq:32} is fulfilled. For $\delta>0$, let 
\[
   U_{1,\delta}:=\left\{ x\in U_1:\dist(x, \partial U_1)<\delta\right\}\,.
\]
Now we choose $\delta$ so that 
\[
   \overline{u_1(U_{1,\delta})}\cap \overline {u_2(U_2)}=\emptyset\,.
\]
Such a choice of $\delta$ is possible by the fact that $u_2$ interpenetrates $u_1$, cf.~Definition~\ref{def:simple_interpenetration}. Set $\bar\delta:=\dist(u_1(U_{1,\delta}),u_2(U_2))$.
Next let $\chi_\delta\in C^{\infty}_0(U_1)$ with $\chi_\delta=1$ on $U_1\setminus U_{1,\delta}$ and $\|\nabla\chi_{\delta}\|_{L^\infty}<C\delta^{-1}$. Set
\begin{equation}
   \label{uhathdef}
   \hat u_{1,h}(x)=\begin{cases}\tilde z_h(x,0) & \text{if } x\in U_1\setminus U_{1,\delta}\\
      \chi_\delta(x) \left(\tilde z_h(x,0)\right)+(1-\chi_\delta(x))u(x) & \text{if } x\in  U_{1,\delta}\\
      \hat u_1(x)& \text{if } x\in \partial \hat U_1\setminus U_1\end{cases}
\end{equation}
and
\begin{equation}
   \label{eq:3}
   u_{2,h} \,= \tilde z_h(\cdot,0)|_{U_2}\,.
\end{equation}

{\bf  Step 3.} Convergence of Brouwer degree in $L^1$.\\
Let
\[
E:=\{x\in U_2:\,u_2(x)\in\hat u_1( \hat U_1)\}\,.
\]
We claim that 
\begin{equation}
\label{eq:45}
	\begin{split}
		\deg(\hat u_{1,h},&\hat U_1,u_{2,h}(\cdot))\to
                \deg(\hat u_1,\hat U_{1},u_2(\cdot)) \\
		&\text{ in } L^1(U_2\setminus E)\quad \text{as $h\to 0$}\,.
	\end{split}
\end{equation}
We prove this claim by  a homotopy argument.\\
By definition of $z_h$ and \eqref{eq:7},
\[
\fint_{[-h/2,h/2]}\d x_3 z_h(\cdot,x_3)\wto u_i \text{ in } W^{1,2}(U_i,\R^3)\,.
\]
By definition of $\tilde z_h$, this holds also true if $z_h$ is replaced by
$\tilde z_h$. By the uniform Lipschitz bound \eqref{eq:2} on $\tilde z_h$, we
also have
\[
\tilde z_h(\cdot,0)\wto u_i \text{ in } W^{1,2}(U_i,\R^3)\,.
\]
By the definitions of $\hat u_{1,h},u_{2,h}$ in \eqref{uhathdef} and
\eqref{eq:3}, we get
\begin{equation}
   \label{eq:6}
   \hat u_{1,h}\wto \hat u_1 \text{ in }W^{1,2}(\hat U_1,\R^3)\quad \text{and}\quad
   u_{2,h}\wto u_2 \text{ in }W^{1,2}(U_2,\R^3)\,.
\end{equation}

Since the uniform Lipschitz bound holds for $\tilde z_h$, there also exist
uniform Lipschitz bounds for $\hat
u_{1,h}$ and $u_{2,h}$ by definition of the latter two. Hence the weak
convergence in \eqref{eq:6} is also true in
$W^{1,p}$ for every $1<p<\infty$. By the compact Sobolev embedding, we have
$\hat u_{1,h}\to \hat u_1$ and $u_{2,h}\to u_2$ in $C^{0,\alpha}$ for every
$0<\alpha<1$, and in particular, we have uniform convergence.\\
Set $E_\e:=\{x\in U_2\setminus E:\dist(x,U_2)$.
Since $E$ is relatively closed in $U_2$, we have $\leb^2(E_\e)\to 0$ as $\e\to 0$. The claim \eqref{eq:45} follows from the continuity of the degree function in
the first and the third argument with respect to uniform convergence. 

{\bf Step 4.} Application of isocapacitary inequality and passage back to 3d.
By the definition of  interpenetration 
(Definition~\ref{def:simple_interpenetration}), there exist $k_1,k_2\in\N$,
$k_1\neq k_2$ and some $C>0$ such that
\[
   \left|\{x\in U_2:\deg(\hat u_1,\hat U_1,u_2(x))=k_i\}\right|>C\,\text{ for }i=1,2\,.
\]
Hence by step 3, there exists $h_0>0$ such that
\begin{equation}
\label{eq:11}
\left|\{x\in U_2:\deg(\hat u_{1,h},\hat U_1,u_{2,h}(x))=k_i\}\right|>C\,\text{for }i=1,2\,
\end{equation}
for $h<h_0$ (which we assume from now on). Let 
\[
A_h:=\{x\in U_2:\deg(\hat u_{1,h},\hat U_1,u_{2,h}(x))=k_1\}
\]
and let $U^\circ_2$ denote the interior of $U_2$. Then by \eqref{eq:11},
$\min(|A_h\cap U^\circ_2|,|U^\circ_2\setminus A_h|)>C$. We apply Lemma~\ref{lemma:isoperimetric-capacity} and obtain
\begin{equation} 
   \label{eq:17b}
   \begin{split}
\kap_1(\partial A_h\cap U^\circ_2)>C\,.
   \end{split}
\end{equation} 
On the other hand, $x\in \partial A_h\cap U^\circ_2$ implies 
\[
   u_{2,h}(x)\in \partial\{y\in\R^3:\deg(\hat u_{1,h},\hat U_1,y)=k_1\}
\]
and hence by \eqref{eq:9}, 
\[
\begin{split}
   \partial A_h\cap U^\circ_2\subset & \{x\in U_2:\,u_{2,h}(x)\in \hat
   u_{1,h}(\hat U_1)\}\,.
\end{split}
\]
By the definition of $\hat u_{1,h}$ in \eqref{uhathdef} and the uniform convergence $\hat
u_{1,h}\to\hat u_1$, $u_{2,h}\to u_2$, we may assume that $\dist(x,\partial
U_2)>\delta$ whenever $u_{2,h}(x)\in \hat u_{1,h}( \hat U_1)$, whence
$\hat u_{1,h}(x)= u_h(x)$ for $x\in \partial A_h\cap U_2^\circ$ and
\[
\partial A_h\cap U^\circ_2
\subset  F_h:=\{x\in U_2: \text{ there exists }\bar
x\text{s.t.\ }u_h(x)=u_h(\bar x) \text{ and }|x-\bar x|>2h\}\,.
\]
By~\eqref{eq:17b},  we have proved
\begin{equation*} 
   \begin{split}
      \kap_{1}\big( F_h \big) >C\,.
   \end{split}
\end{equation*} 
By this last inequality and the results from step 1, the conditions of Proposition~\ref{prop:05291369832965} are fulfilled and we can apply it to $\tilde{z}_{h}$ and obtain that
\begin{equation*} 
    \begin{split}
       \leb^{3}\left( \insieme{x:\ \tilde{z}_{h} \text{ is not $1$-to-$1$} }\right) > c h^{2}.
    \end{split}
 \end{equation*} 
 
 By \eqref{eq:2} and the definition of $z_h$, one has that 
 \begin{equation*} 
    \begin{split}
        \leb^{3}\left( \insieme{x:\ z_{h} \text{ is not $1$-to-$1$} }\right)= \leb^{3}\left( \insieme{x:\ y_{h} \text{ is not $1$-to-$1$} }\right) > ch^{2},
    \end{split}
 \end{equation*} 
 which concludes the proof. 
\end{proof}

\section{Application to plate theories derived as $\Gamma$-limits}
\label{sec:appl} 
As before, let $S\subset \R^2$ be open and bounded and $\Omega=S\times[-1/2,1/2]$. 
We  define the elastic energy of a 3-dimensional body. Let the inhomogeneous stored energy $W:\Omega\times
\R^{3\times 3}\rightarrow [0,\infty)$ satisfy
\begin{itemize}
\item[(i)]$W(x,FR)=W(x,F)$ for all $R\in SO(n)$
\item[(ii)]$W(x,\id_{3\times 3})=0$
\item[(iii)]$W(x,F)\geq c\, \text{dist}^2(F,SO(3))$ for some uniform
  constant $c$
\item[(iv)]$ W\in C^2(S,\mathcal T)$ where $\mathcal{T}$ is an $\epsilon$-neighbourhood of $SO(3)$.
\item[(v)]$W(x,F)=W(z,F)$ if $(x-z)\| e_3$.
\end{itemize}
We introduce the quadratic forms $Q_3:\Omega\times\R^{3\times 3}\rightarrow\R,\,
Q_2:S\times\R^{2\times 2}\rightarrow\R$ by
\begin{align*}
Q_3(\bar x,\bar F)&=\frac{D^2W(x,F)}{DF^2}|_{x=\bar x,F=\id}(\bar
F,\bar F)\\
Q_2(\bar x',\bar F')&=\min\left\{Q_3(\bar x,F'+a\otimes e_3+e_3\otimes a):a\in \R^3\right\}
\end{align*}
The integral of $W$ satisfying properties (i) through (v) above is the
(rescaled) elastic energy functional 
\begin{equation*}
\begin{array}{rrl}
I_{h}:&W^{1,2}(\Omega,\R^3)\to&\R\\
&y\mapsto&\int_{\Omega} W(x,\nabla_h y(x)) \d x\,.
\end{array}
\end{equation*}
The penalization of interpenetration of matter is expressed in a modification of  the 3d energy functional $I_h$, assigning infinite energy to
non-physical deformations. We define $\bar
I_{h}:W^{1,2}(\Omega,\R^3)\to\R\cup\{+\infty\}$ by
\begin{equation}
\label{elenIhbar}
\bar I_h(y)=\Bigg\{\begin{array}{ll}\int_{\Omega} W(x,\nabla_h y(x)) \d x&\text{ if }y
  \text{ is invertible a.~e.}\\+\infty&\text{ else.}\end{array}
\end{equation}
\subsection{Contractive maps}
\label{sec:contract}
In \cite{MR2358334}, the $\Gamma$-limit of the functional $h^{-\beta}I_{h}$ for the scaling regime $0<\beta<5/3$
has been derived (using results from \cite{MR1365259}). The result can be stated
as follows: We say $y_h\in W^{1,2}(\Omega_h,\R^3)$ converges uniformly to
$u\in W^{1,2}(S,\R^3)$ as $h\to 0$ if
\[
\lim_{h\to 0} \text{ess sup}_{(x_1,x_2,x_3)\in\Omega_h}|y_h(x_1,x_2,x_3)-u(x_1,x_2)|=0\,.
\]
Further, we say that $u\in W^{1,\infty}(S,\R^3)$ is \emph{short} if
\[
\nabla u^T\nabla u\leq \id_{2\times 2} \quad\text{a.~e.}
\]
i.e., $\id_{2\times 2}-\nabla u^T\nabla u$ is positive semi-definite almost everywhere.
The $\Gamma$-convergence result from \cite{MR2358334} can be stated as saying
that for $0<\beta<5/3$,
\[
\left(\Gamma-\lim_{h\to 0} h^{-\beta}I_h\right)(u)=\begin{cases}0 & \text{ if }
  u \text{ is short}\\ +\infty & \text{ else}\,,\end{cases}
\]
where the $\Gamma$-limit is taken with respect to uniform convergence. In fact,
it could just as well have been formulated for weak convergence in $W^{1,2}(\Omega,\R^3)$
(see the discussion in \cite{MR2358334}). This result includes the trivial lower
bound 
\[
\liminf_{h\to 0} h^{-\beta}I_h(y_h)\geq 0
\] 
for sequences $y_h$ that converge
towards a short map $u$.
The application of Theorem~\ref{mainthm} immediately yields the following corollary, that is a sharper
lower bound for $h^{-\beta}\bar I_h$ for $1<\beta<5/3$.
\begin{corollary}[to Theorem~\ref{mainthm}]
Let $1<\beta$, $u\in W^{1,\infty}(S,\R^3)$, and let $U_1,U_2\subset S$ be disjoint simply connected Lipschitz domains
such that with $u_1:=u|_{U_1}$, $u_2:=u|_{U_2}$,  $u_2$ interpenetrates $u_1$. Further let $y_h\in W^{1,2}(\Omega_h)$ converge uniformly to $u$.
Then
\[
\liminf_{h\to 0} h^{-\beta} \bar I_h(y_h)=+\infty\,.
\]
\end{corollary}

\subsection{Nonlinear bending theory}
\label{sec:nlb}
In \cite{MR1916989}, the nonlinear Kirchhoff plate theory was obtained as the
$\Gamma$-limit of the scaled functional $h^{-2}I_h$. Nonlinear plate theory can
be defined as follows:

Let the set of $W^{2,2}$-isometries of $S$ into $\R^3$ be denoted by
\[
\A=\{u\in W^{2,2}(S,\R^3):\nabla u^T\nabla u=\id_{2\times 2}\}\,.
\]
Further, the second fundamental form is given by
\[
{\mathrm{II}}_{[u]}=\nabla u^T\cdot \nabla \nu\,,
\]
where $\nu=u_{,1}\wedge u_{,2}$ is the normal of the isometry $u$.
Nonlinear plate theory may be defined via the energy  functional
\[
\begin{array}{rrl}\Ik:&W^{2,2}(S,\R^3)\to&\R\cup\{+\infty\}\\
&u\mapsto&\begin{cases} \frac{1}{24}\int_S Q_2(x',\sff_{[u]}) \d x' &\text{ if } u
   \in \A\\
+\infty&\text{ else}\end{cases}
\end{array}
\]
The limiting deformations with finite bending energy will be the set of $y\in
W^{1,2}(\Omega,\R^3)$ such that there exists $u\in\A$ with
\begin{equation}
y(x',x_3)=u(x')\text{ for a.~e.\ }x'\in S, x_3\in[-1/2,1/2]\,.\label{eq:27}
\end{equation}
We define the auxiliary functional $\Iko:W^{1,2}(\Omega,\R^3)\to\R\cup\{+\infty\}$ by
\begin{equation}
\label{Ki3d}
\Iko(y)=\begin{cases} \Ik(u) &\text{ if }\exists u\in \A \text{ such that
    eq.~\eqref{eq:27} holds}\\
\infty&
\text{ else. }
\end{cases}
\end{equation}
\begin{theorem}[\cite{MR1916989}, $\Gamma-\liminf$-inequality]
\label{thm:nlbending}
Let $y_h,y\in W^{1,2}(\Omega,\R^3)$,  $y_h\wto y$ in
$W^{1,2}(\Omega,\R^3)$. Then  
\[
\liminf_{h\to 0} h^{-2}I_h(y_h)\geq \Iko(y)\,.
\]
\end{theorem}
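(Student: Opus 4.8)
The plan is to run the argument of Friesecke, James and M\"uller whose technical core is the geometric rigidity estimate of Theorem~\ref{thm:rigidity}. We may assume $\liminf_{h\to0}h^{-2}I_h(y_h)<\infty$ and pass to a subsequence (not relabelled) along which $h^{-2}I_h(y_h)\le C$; by the coercivity property (iii) of $W$ this yields $\int_\Omega\dist^2(\nabla_h y_h,\son)\d x\le Ch^2$. Covering $\Omega$ by a mesh of cubes of side length comparable to $h$, applying Theorem~\ref{thm:rigidity} on each cube (its constant being scale invariant) and controlling the difference of the rotations attached to neighbouring cubes, one constructs a map $R_h\in W^{1,2}(S,\son)$ with
\[
\|\nabla_h y_h-R_h\|_{L^2(\Omega)}^2\le Ch^2,\qquad \|\nabla' R_h\|_{L^2(S)}^2\le C.
\]
Hence, along a further subsequence, $R_h\to R$ in $L^2(S)$ with $R\in W^{1,2}(S,\son)$; since then $\nabla_h y_h\to R$ in $L^2(\Omega)$ while $y_h\to y$ in $W^{1,2}(\Omega,\R^3)$, we get $\partial_3 y=0$, so $y(x',x_3)=u(x')$ with $\nabla u=(Re_1,Re_2)$. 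In particular $u\in W^{2,2}(S,\R^3)$, $\nabla u^T\nabla u=\id$, i.e.\ $u\in\A$, and $\nu=Re_3$; thus $\Iko(y)=\Ik(u)<\infty$, so the case $\Iko(y)=+\infty$ is covered by contraposition.

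Next I would introduce the scaled strain $G_h:=h^{-1}(R_h^T\nabla_h y_h-\Id)$, which is bounded in $L^2(\Omega,\R^{3\times3})$ by the first estimate above, so $G_h\wto G$ along a subsequence. Writing $\nabla_h y_h=R_h+hR_hG_h$ and comparing the mixed derivatives $\partial_3\partial_\alpha y_h=\partial_\alpha\partial_3 y_h$ for $\alpha\in\{1,2\}$, the contributions of order $h$ vanish as $h\to0$ in the sense of distributions and one obtains $\partial_3(Ge_\alpha)=R^T\partial_\alpha R\,e_3$; testing against $e_\beta$ and using $Re_\alpha=\partial_\alpha u$, $Re_3=\nu$ identifies the upper-left $2\times2$ block of $G$ as $x_3\,\sff_{[u]}(x')+G_0'(x')$ for some $x_3$-independent field $G_0'$.

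Then I would extract the lower bound. On the ``good'' set $\{|hG_h|\le\delta\}$, whose complement has measure at most $C\delta^{-2}h^2\to0$, frame indifference (property (i)) together with $W(x,\Id)=0$, $DW(x,\Id)=0$ and the $C^2$-regularity (iv) gives a Taylor expansion of $W$ around $\Id$ yielding
\[
h^{-2}I_h(y_h)\ \ge\ \tfrac12\int_\Omega\one_{\{|hG_h|\le\delta\}}\,Q_3(x,G_h)\d x-C\,\omega(\delta),
\]
with $\omega(\delta)\to0$ as $\delta\to0$. For fixed $\delta$, $\one_{\{|hG_h|\le\delta\}}G_h\wto G$ in $L^2$, and $F\mapsto\int_\Omega Q_3(x,F)\d x$ is sequentially weakly lower semicontinuous since $Q_3(x,\cdot)$ is a nonnegative quadratic form by (iii); passing to the $\liminf$ in $h$ and then letting $\delta\to0$ gives $\liminf_{h\to0}h^{-2}I_h(y_h)\ge\tfrac12\int_\Omega Q_3(x,G)\d x$.

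Finally I would optimize over the undetermined part of $G$: since $Q_3(x,F)=Q_3(x,\sym F)$ and every symmetric $3\times3$ matrix is the extension of its $2\times2$ block plus a matrix $a\otimes e_3+e_3\otimes a$, the definition of $Q_2$ gives $Q_3(x,\sym G)\ge Q_2(x',(\sym G)')$ with $(\sym G)'=x_3\,\sff_{[u]}(x')+\sym G_0'(x')$. As $Q_2(x',\cdot)$ is a nonnegative quadratic form and $\int_{-1/2}^{1/2}x_3\d x_3=0$, the cross term vanishes, so $\int_{-1/2}^{1/2}Q_2(x',x_3\sff_{[u]}+\sym G_0')\d x_3\ge\tfrac1{12}Q_2(x',\sff_{[u]})$; integrating over $S$ and using property (v), $\tfrac12\int_\Omega Q_3(x,G)\d x\ge\tfrac1{24}\int_S Q_2(x',\sff_{[u]})\d x'=\Ik(u)=\Iko(y)$, as claimed. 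The main obstacle is the first step --- the cube-by-cube application of geometric rigidity and the gluing estimate producing the uniform $W^{1,2}$-bound on $R_h$; everything after that is the identification of the limit strain and a standard convexity/lower-semicontinuity argument.
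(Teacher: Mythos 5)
The paper does not prove Theorem~\ref{thm:nlbending}; it is cited verbatim from Friesecke, James and M\"uller \cite{MR1916989}, and the only proof-adjacent remark in the paper is that the argument rests on the geometric rigidity estimate of Theorem~\ref{thm:rigidity}. Your sketch is a faithful and essentially correct reproduction of the FJM argument (rigidity on a mesh of $h$-cubes, construction of the rotation field $R_h\in W^{1,2}(S,\son)$, weak limit of the scaled strain $G_h=h^{-1}(R_h^T\nabla_h y_h-\Id)$, identification of the linear-in-$x_3$ part of $\sym G$ with $x_3\,\sff_{[u]}$, truncation plus Taylor expansion plus weak lower semicontinuity of $\int Q_3$, and the pointwise minimization $Q_3\ge Q_2$ together with $\int_{-1/2}^{1/2}x_3^2\,\mathrm dx_3=1/12$), so it takes the same approach as the cited source.
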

The application of Theorem~\ref{mainthm} to nonlinear bending theory yields the
following sharper version of the lower bound for $h^{-2}\bar I_h$.

\begin{corollary}[to Theorem~\ref{mainthm}]
\label{thm:nlbendinginter}
Let $u\in \A$, and let $U_1,U_2\subset S$ be disjoint simply connected Lipschitz domains
such that with $u_1:=u|_{U_1}$, $u_2:=u|_{U_2}$,  $u_2$ interpenetrates $u_1$.
Further, let  $y_h\wto y$ in $W^{1,2}(\Omega,\R^3)$, with
\begin{equation*}
\limsup_{h\to 0}h^{-2}\|\dist(\nabla_h y_h,{\mathrm{SO}}(3))\|^2_{L^2(\Omega)}<\infty
\end{equation*}
and
\[
y(x',x_3)=u(x') \quad\text{ for a.~e.\ }\quad x'\in S,\, x_3\in[-1/2,1/2]\,.
\]
Then
\[
\liminf_{h\to 0} h^{-2} \bar I_h(y_h)=+\infty\,.
\]
\end{corollary}

\begin{remark}
In the case $\beta>2$, a consequence of the compactness part of the $\Gamma$-convergence result for $h^{-\beta} I_h$ in \cite{MR2210909} is the following: Whenever  
\[
\limsup_{h\to
  0}h^{-\beta}I^h(y_h)<\infty\quad\text{and}\quad y_h\wto y \text{ in
}W^{1,2}(\Omega,\R^3)\,
\]
 then
$y$ is (up to a rigid motion) just the
projection onto the first two components, $y(x)=x'$. 
This indicates that if $S$ is connected, and $U_1,U_2\subset S$ are disjoint
subsets, it is impossible to state sufficient conditions for the limits
that assure that $y_h|_{(U_1\cup U_2)\times[-1/2,1/2] }$ is 2-to-1 on a
set of positive measure. One can still create a setting in which our main result is applicable,  considering reference sets $S$ with more than one connected
component. We refrain from doing so here for the sake of brevity.
\end{remark}

\bibliographystyle{plain}
\bibliography{inter}
\end{document}